\documentclass[12pt,a4paper,reqno]{article}
\usepackage[T1]{fontenc}
\usepackage{lmodern}
\usepackage[a4paper,left=25mm,top=28.4mm,textwidth=160mm,textheight=225mm,
headheight=0pt,headsep=0pt,footskip=12mm]{geometry}
\usepackage{amsmath,amssymb,amsthm,mathtools}
\usepackage{booktabs,array,graphicx,microtype,needspace}
\usepackage{indentfirst,xcolor,capt-of}
\usepackage[colorlinks=true,linkcolor=red,anchorcolor=green,
citecolor=blue,urlcolor=blue,bookmarksopen=true,
bookmarksnumbered=true,bookmarksdepth=2]{hyperref}
\hypersetup{pdftitle={New result on Lane-Emden conjecture with exponents in a disk tangent to the critical hyperbola},
pdfauthor={Haoyang Lu; Zhitao Zhang},
pdfsubject={Liouville theorem with exact rational positivity certificates}}
\numberwithin{equation}{section}
\theoremstyle{plain}
\newtheorem{theorem}{Theorem}[section]
\newtheorem{lemma}[theorem]{Lemma}
\newtheorem{proposition}[theorem]{Proposition}
\newtheorem*{laneemdenconjecture}{Conjecture (Lane-Emden)}
\newtheorem{remark}[theorem]{Remark}

\newcommand{\R}{\mathbb R}
\newcommand{\dd}{\,\mathrm dx}
\newcommand{\Div}{\operatorname{div}}
\newcommand{\tr}{\operatorname{tr}}
\newcommand{\norm}[1]{\lVert#1\rVert}
\newcommand{\proofstep}[1]{%
  \par\medskip\Needspace{3\baselineskip}%
  \noindent\textbf{#1}\enspace\ignorespaces}
\allowdisplaybreaks[2]
\title{\large\textbf{New result on Lane-Emden conjecture
with exponents in a disk}\thanks{
Z. Zhang is supported by the National Key R\&D Program of China
(2022YFA1005601) and the National Natural Science Foundation of China
(12671146,12031015).}}
\author{Haoyang Lu$^{a,b}$~~Zhitao Zhang$^{a,b}$\\
{\small $^{a}$ State Key Laboratory of Mathematical Sciences, Academy of Mathematics and Systems}\\
{\small Science, the Chinese Academy of Sciences, Beijing 100190, P. R. China}\\
{\small $^{b}$ School of Mathematical Sciences, University of Chinese Academy of Sciences,}\\
{\small Beijing 100049, P. R. China}\\
{\small E-mails: luhaoyang25@mails.ucas.ac.cn (H. Lu), zzt@math.ac.cn (Z. Zhang)}}
\date{}

\begin{document}
\maketitle
\begin{abstract}
We obtain new results on Lane-Emden conjecture to prove a Liouville theorem for the Lane-Emden system
$$\begin{cases}
-\Delta u=v^p, & x\in\R^N,\\
-\Delta v=u^q, & x\in\R^N,
\end{cases}
$$ with $N\ge3$,
$p,q>0$, $pq>1$, and
\[
\left(p-\frac2{N-2}\right)^2+
\left(q-\frac2{N-2}\right)^2\le\frac{2N^2}{(N-2)^2},
\]
the system has no positive classical solution as $(p,q)$ in the disk unless $p=q=(N+2)/(N-2)$, no growth or decay assumptions are imposed, and the disk is tangent to the critical hyperbola. Especially, for $N\ge5$,
the disk contains a nonempty open set  not covered by the criteria of
Busca-Man\'asevich~\cite{BM} and Souplet~\cite{Souplet}, or by the
explicit conditions of Li-Li-Wei~\cite{LLW}.
After reducing the exponent range by known Liouville criteria, we first construct vector fields to
derive a divergence inequality, then apply Sobolev embedding theorem,
Gagliardo-Nirenberg interpolation inequality, and Young's inequality to give
a uniform local integral bound
$\int_{B_1}(u^rv^{2p}+u^{2q}v^s)\dd\le C(N,p,q)$
for suitable weights  for any positive solution $(u,v)$, finally the nonexistence of positive classical solutions follows by scaling.
\\[0.3em]
\textbf{Keywords:} Lane-Emden system, Liouville theorem,
Picone identity, divergence identity, computer-assisted proof.\\[0.3em]
\textbf{AMS Subject Classification (2020):} 35J47, 35B53, 35B09.
\end{abstract}

\section{Introduction}\label{sec:introduction}

We study the nonexistence of positive entire solutions of the
Lane-Emden system
\begin{equation}\label{eq:system}
\begin{cases}
-\Delta u=v^p, & x\in\R^N,\\
-\Delta v=u^q, & x\in\R^N,
\end{cases}
\end{equation}
where $N\ge3$ and $p,q>0$. By a positive entire solution we mean
a pair $u,v\in C^2(\R^N)$ satisfying \eqref{eq:system}, with
$u,v>0$.

Liouville theorems for the system  \eqref{eq:system} are used to obtain a priori
bounds for elliptic boundary value problems; see
Cl\'ement, de Figueiredo, and Mitidieri~\cite{CFM}.
The expected critical criterion for existence or not is the critical Sobolev hyperbola
\begin{equation}\label{eq:critical}
\frac1{p+1}+\frac1{q+1}=\frac{N-2}{N}.
\end{equation}

A pair $(p,q)$ is called subcritical if it lies below this hyperbola,
which means the left-hand side of \eqref{eq:critical} is strictly larger
than the right-hand side. Figure~\ref{fig:conjecture} shows the
critical hyperbola together with the curve $pq=1$.
We find that the equation~\eqref{eq:critical} can also be written as
\begin{equation}\label{L-Z-9-21}
\left(p-\frac2{N-2}\right)\left(q-\frac2{N-2}\right)
=\frac{N^2}{(N-2)^2}.
\end{equation}
\begin{laneemdenconjecture}
Let $N\ge3$ and $p,q>0$. If
\begin{equation}\label{eq:subcritical}
\frac1{p+1}+\frac1{q+1}>\frac{N-2}{N},
\end{equation}
then system \eqref{eq:system} has no positive classical solution.
\end{laneemdenconjecture}

\par\smallskip\noindent
\begin{minipage}{\textwidth}
\centering
\includegraphics[width=0.66\textwidth,trim=0pt 14pt 0pt 3pt,clip]{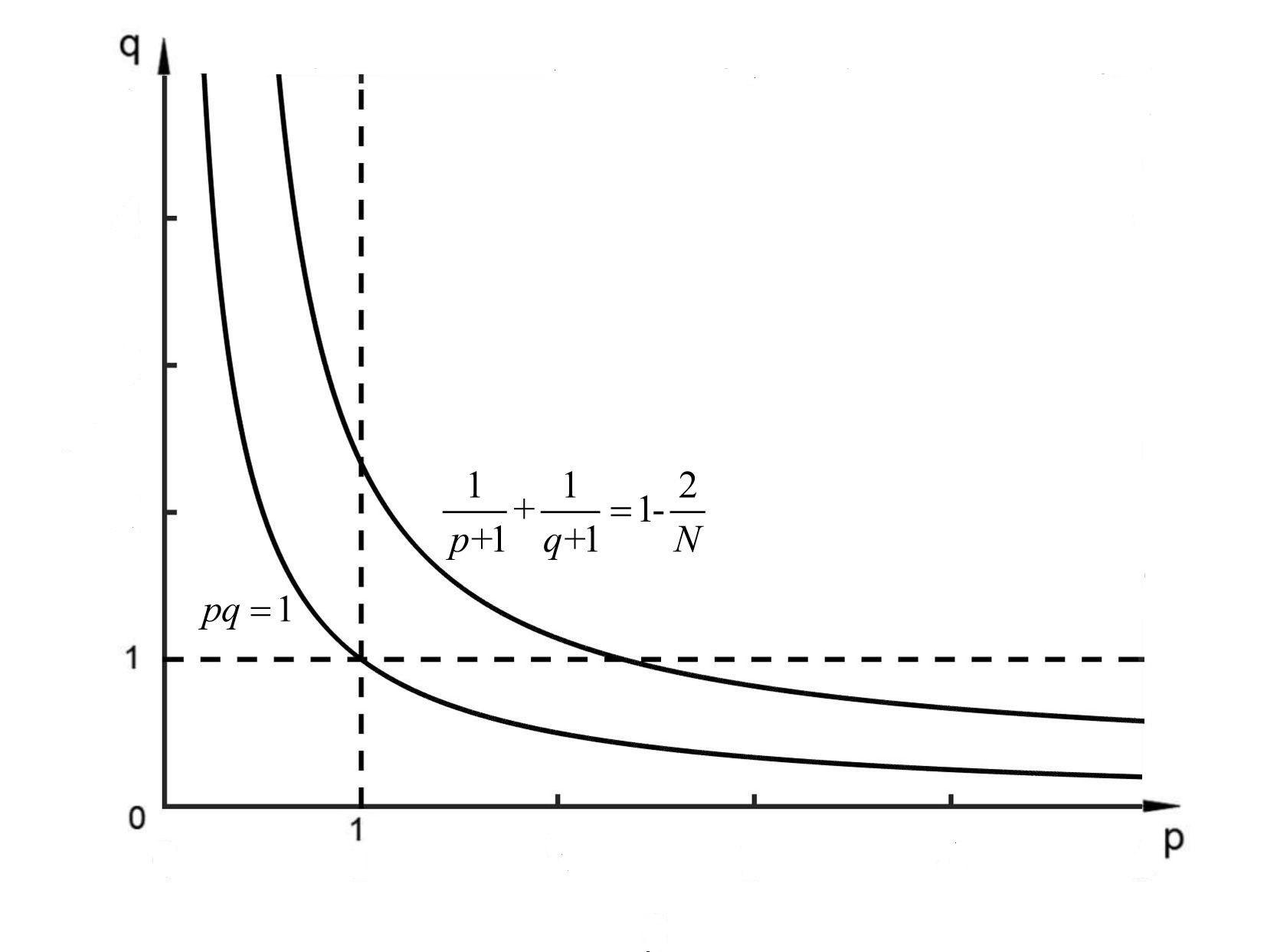}\par
\begingroup
\let\normalsize\footnotesize
\setlength{\abovecaptionskip}{4pt}
\captionof{figure}{The critical Sobolev hyperbola and $pq=1$. %($N\ge5$).
Dashed lines mark $p=1$ and $q=1$.}
\label{fig:conjecture}
\endgroup
\end{minipage}
\par\medskip

In this paper we write
\[
P=\frac{N+2}{N-2}.
\]
When $pq\neq 1$, the scaling exponents are
\begin{equation}\label{eq:scaling}
\alpha=\frac{2(p+1)}{pq-1},\qquad
\beta=\frac{2(q+1)}{pq-1}.
\end{equation}
If $pq\neq 1$ and $(u,v)$ is a positive solution of \eqref{eq:system},
then $(R^\alpha u(Rx),R^\beta v(Rx))$ solves the same system for
every $R>0$. When $pq>1$, condition \eqref{eq:subcritical} is
equivalent to $\alpha+\beta>N-2$.

For the scalar equation $-\Delta w=w^\ell, x\in\R^N$, with $1<\ell<\frac{N+2}{N-2}$,
Gidas and Spruck~\cite{GS} proved nonexistence of positive entire
solutions by differential identities and weighted integral
estimates. Chen and Li~\cite{ChenLi91}
gave another proof using the Kelvin transform and moving planes.

For the system \eqref{eq:system}, Mitidieri~\cite{MitidieriRellich}
introduced a Rellich-type identity. De Figueiredo and Felmer~\cite{FF}
used the Kelvin transform and moving planes to prove nonexistence
for $0<p,q\le P$, $(p,q)\ne(P,P)$.
Mitidieri~\cite{Mitidieri96} proved nonexistence of positive radial
solutions as $(p,q)$ is below the critical hyperbola using integral identities.
Serrin and Zou~\cite{SerrinZou96} established
nonexistence of positive solutions for $pq\le1$, or $pq>1$ with
$\max\{\alpha,\beta\}\ge N-2$. In dimension three, they also proved the conjecture for $(p,q)$
in the full subcritical range with the added restriction that solutions have polynomial growth,
using integral estimates and a Rellich-Pohozaev identity. Their radial
constructions in 1998~\cite{SerrinZou98} show that positive entire
solutions exist for $(p,q)$ on and above the critical hyperbola.

Reichel and Zou~\cite{ReichelZou} recovered nonexistence in the range
$0<p,q\le P$, $(p,q)\ne(P,P)$ by moving spheres.
Busca and Man\'asevich (2002)~\cite{BM} proved nonexistence of positive solutions when
\[
\alpha,\beta\in\left[\frac{N-2}{2},N-2\right),\qquad
(\alpha,\beta)\ne\left(\frac{N-2}{2},\frac{N-2}{2}\right).
\]
Their proof uses moving planes and maximum principles after
transforming the system to the cylinder $\R\times S^{N-1}$.
Together with the preceding classical criterion, this covers
$\min\{\alpha,\beta\}\ge(N-2)/2$, except for the critical
diagonal pair.

In low dimensions, Pol\'a\v cik, Quittner, and Souplet~\cite{PQS}
used the doubling lemma and rescaling to reduce the problem for
$p,q>1$ to bounded entire solutions. This removed the growth
assumption from the three-dimensional argument and proved the
conjecture in $\mathbb{R}^3$. Souplet~\cite{Souplet} established
nonexistence under \eqref{eq:subcritical} and
$\max\{\alpha,\beta\}>N-3$, hence he proved the conjecture for $N=4$. His proof combines a component
comparison, a Rellich-Pohozaev identity, Sobolev interpolation
on spheres, and a feedback argument.
Chen and Li (2009)~\cite{ChenLi} used moving planes for the integral
formulation to obtain radial symmetry and subcritical nonexistence of positive solutions
under additional integrability assumptions. Cheng and Huang
(2019)~\cite{ChengHuang} used integral estimates to characterize
the conjecture by an energy-type a priori bound.

More recently, K. Li, M. Li, and J. Wei~\cite{LLW} combined an Obata-type integral
inequality, Picone's identity and scaling to prove nonexistence
for $N\ge5$, $p,q\ge1$, and
\begin{equation}\label{eq:LLW}
\frac1{p+1}+\frac1{q+1}-\frac{N-2}{N}\ge\frac4{N^2}.
\end{equation}
Their Theorem~3.2 replaces the number $4$ in \eqref{eq:LLW} by any fixed $c_0>2$, when
$N$ is sufficiently large depending on $c_0$.
% BEGIN HLE HISTORY -- based on Shantou-2026-1-19, Part (A).

Related work concerns the H\'enon-Lane-Emden system, with weights
$|x|^a$ and $|x|^b$ in the two equations of \eqref{eq:system},
considered away from the origin,
$$\begin{cases}
-\Delta u=|x|^a v^p, & x\in\R^N,\\
-\Delta v=|x|^b u^q, & x\in\R^N.
\end{cases}
$$
Its scalar form goes back to H\'enon's 1973 stellar
model~\cite{Henon73}. For $a,b>-2$, the weighted subcritical
condition is $(N+a)/(p+1)+(N+b)/(q+1)>N-2$, and positive solutions are
taken in \mbox{$C^2(\R^N\setminus\{0\})\cap C(\R^N)$}.
Bidaut-V\'eron and Giacomini (2010)~\cite[Theorem 1.4]{BVG10}
proved that, for $pq>1$, positive radial entire solutions exist
precisely on or above the weighted critical hyperbola.
Phan and Souplet (2012)~\cite{PhanSouplet12} proved
the three-dimensional scalar conjecture for bounded positive
solutions. Fazly and Ghoussoub (2014)~\cite{FazlyGhoussoub14}
extended this result to bounded solutions of the system with
nonnegative weights and also treated stable solutions in higher
dimensions. Kui Li and Zhitao Zhang (2019)~\cite{LiZhang19} proved
the full weighted conjecture in $\R^3$ by weighted energy
estimates, Sobolev inequalities on $S^2$, and scaling, without
boundedness assumptions or decay assumptions at infinity on positive solutions $(u,v)$.

Li and Zhang (2021)~\cite{LiZhang21} proved Liouville theorems for
higher-order Hardy-H\'enon systems and their integral formulations.
In 2022, Cheng, Li, and
Zhang~\cite{ChengLiZhang22} obtained Liouville theorems for
generalized weighted divergence-form systems in $\R^2$ and for
bounded positive solutions in $\R^3$, under the corresponding parameter
conditions. H. Li (2022)~\cite{HangLi22} proved nonexistence of positive solutions
for the weighted system when $a,b>-2$, $pq>1$, the weighted
subcritical condition holds, and $p,q<4/3$ for $N=4$ or
$p,q<10/9$ for $N=5$. Li, Yu, and Zhang (2024)~\cite{LiYuZhang24}
treated fractional Hardy-H\'enon systems by integral estimates
and scaling spheres to obtain Liouville theorems.
% END HLE HISTORY
\par
Our main result concerns the disk centered at
$(2/(N-2),2/(N-2))$ with radius \mbox{$\sqrt2\,N/(N-2)$}, whose boundary
is tangent to the critical hyperbola \eqref{eq:critical} at $(P,P)$.
% Keep Theorem 1.1 and its schematic together, with no intervening text.
\par\noindent
\begin{minipage}{\textwidth}
\begin{theorem}\label{thm:main}
Let $N\ge3$ be an integer, and let $p,q>0$ satisfy $pq>1$. If
\begin{equation}\label{eq:disk}
\left(p-\frac2{N-2}\right)^2+
\left(q-\frac2{N-2}\right)^2
\le\frac{2N^2}{(N-2)^2},
\qquad (p,q)\ne(P,P),
\end{equation}
then \eqref{eq:system} has no positive classical solution.
\end{theorem}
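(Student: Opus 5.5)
The proof has three stages. First we cut the disk down to the hard region using known Liouville criteria. Then we derive a uniform local integral bound from a divergence inequality. Finally, scaling turns that bound into nonexistence.

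\textbf{Reduction.} Write $a=p-\tfrac2{N-2}$ and $b=q-\tfrac2{N-2}$. The disk \eqref{eq:disk} gives $a^2+b^2\le 2N^2/(N-2)^2$, while by \eqref{L-Z-9-21} the critical hyperbola is $ab=N^2/(N-2)^2$. Since $2ab\le a^2+b^2$, every point of the disk with $a,b>0$ has $ab\le N^2/(N-2)^2$, with equality only when $a=b=N/(N-2)$, that is, at $(P,P)$. Points with $a\le0$ or $b\le0$ are also subcritical, because they have $\min\{p,q\}\le 2/(N-2)$, and together with $pq>1$ this puts them under the hyperbola. Hence the whole disk minus $(P,P)$ lies in the subcritical region \eqref{eq:subcritical}.

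Next we remove the parts already covered by known results. The Serrin--Zou criterion disposes of $\max\{\alpha,\beta\}\ge N-2$. Busca--Man\'asevich disposes of $\min\{\alpha,\beta\}\ge (N-2)/2$ away from the diagonal critical point. For $N=3,4$, Pol\'a\v cik--Quittner--Souplet and Souplet settle the whole subcritical range. What remains is $N\ge5$ and, by symmetry, $\alpha<(N-2)/2<\beta<N-2$. This means $p>q$ and $q$ is bounded away from $0$, which gives a compact set of exponents on which every constant can be taken uniform.

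\textbf{Divergence inequality.} For a positive solution we build a vector field $W$ in the spirit of Gidas--Spruck, Obata and Picone. It is a combination of terms like
\[
u^{r-1}v^{p}\nabla u,\quad u^{r}v^{p-1}\nabla v,\quad u^{a_1}v^{b_1}\big(\nabla u\cdot\nabla v\big)\nabla v,\quad u^{a_2}v^{b_2}\,|\nabla u|^2\nabla u,
\]
with free exponent parameters. Using the equations and the Bochner identity we compute $\Div W$ and aim for
\[
\Div W\ \ge\ c\,(u^r v^{2p}+u^{2q}v^s)+Q,
\]
where $Q$ is a quadratic form in $D^2u$, $D^2v$, $\nabla u\otimes\nabla u/u$, and so on, required to be positive semidefinite. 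Positivity reduces to checking that certain polynomial matrices in $(N,p,q,\text{parameters})$ are nonnegative. I expect this to be the main obstacle. The condition \eqref{eq:disk} should be exactly what allows a choice of parameters making the discriminant nonnegative, much as $\ell<P$ does in Gidas--Spruck. I would first try a one-parameter Picone-type ansatz and read off the discriminant. If that fails, I would allow more parameters and certify positivity by exact rational sums of squares, piecewise on a partition of the reduced region.

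\textbf{Local bound and scaling.} Multiply the inequality by a cutoff $\varphi^m$ supported in $B_2$ and integrate. The boundary terms are $\int W\cdot\nabla\varphi^m$, and these are controlled by Young's inequality in terms of a small multiple of the good integrand plus quantities such as $\int_{B_2} u^{\theta}$ and $\int_{B_2}|\nabla u|^{\kappa}$. Those lower-order integrals are estimated using the standard bounds $\int_{B_R}v^p\le CR^{N-2-\alpha}$ and $\int_{B_R}u^q\le CR^{N-2-\beta}$, obtained from testing with $\varphi$. We also use Sobolev embedding on annuli, or on spheres after choosing a good radius, and Gagliardo--Nirenberg interpolation. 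The point is to close the estimate with a constant depending only on $N,p,q$, which should be possible since $\beta<N-2$ and $\alpha>0$ in the reduced region. This yields
\[
\int_{B_1}(u^rv^{2p}+u^{2q}v^s)\,dx\le C(N,p,q).
\]
Now apply the bound to the rescaled solutions $(R^\alpha u(Rx),R^\beta v(Rx))$. This gives
\[
\int_{B_R}(u^rv^{2p}+u^{2q}v^s)\le C R^{N-\gamma},
\]
where $\gamma=r\alpha+2p\beta$, with the analogous exponent for the other term. We choose the weights $r,s$ so that $\gamma>N$. Letting $R\to\infty$ forces $u^rv^{2p}\equiv0$, which contradicts positivity.
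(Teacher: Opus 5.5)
\textbf{Gap: the divergence inequality is never constructed.} Your outline has the right architecture: reduction by known criteria, a coercive divergence inequality, a solution-independent bound on $B_1$, then scaling. Your subcriticality argument via $2ab\le a^2+b^2$ is correct. But the step that carries the whole theorem is only a search plan. You do not specify the vector field, the weights or the parameters, and you do not show why the coefficients come out positive. As written, the only place the disk enters your argument is subcriticality, and that cannot suffice, since the full subcritical conjecture is open.

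The paper's actual content sits in this step, in three parts.
\begin{itemize}
\item \emph{Component identity.} For each component it uses an exact Obata--Picone identity for $J_u=u^rD^2u\,\nabla u+(k-\tfrac r2)u^{r-1}|\nabla u|^2\nabla u+(\tfrac1N-b)u^rv^p\nabla u+c\,u^{r+1}v^{p-1}\nabla v$. Note the Hessian term, which is missing from your list; it is what produces the trace-free square. This expresses $\Div J_u$ as two squares plus $A(r,k)u^{r-2}|\nabla u|^4+b\,u^rv^{2p}-c\,u^{r+q+1}v^{p-1}$, with $b,c$ forced by completing the square.
\item \emph{Balanced weights.} It takes $r+2=\tilde\kappa(q+1)$ and $s+2=\tilde\kappa(p+1)$ with $\tilde\kappa=(N-3)/N$. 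Then every zeroth-order term of $\Div(J_u+\omega J_v)$ is $F_2$ times a power of the single ratio $\rho=v^{p+1}/u^{q+1}$. This is what lets a choice of $\omega$ absorb the negative cross terms, provided $A_u,A_v,b_u,b_v,c_u,c_v>0$ and $b_ub_v>c_uc_v$.
\item \emph{Sign verification.} It fixes $k_i,\theta_i$ explicitly and proves these signs. The disk enters through $g\ge0$ in the product inequality, via $\widehat Q=y^2T_*+zR_*$ and $2\widehat Q=-T_*g+zH_*$. Positivity of $\widehat\delta_u,\widehat\delta_v,R_*,H_*$ is certified by exact Bernstein coefficients on $[0,8/15]\times[0,1]^2$, uniformly in $N$.
\end{itemize}
That box exists only because the reduction yields $\sqrt P\le q<P<p$ and $q(p-P+1)\ge P$, and your reduction does not derive these.

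\textbf{Further points that fail as set up.}
\begin{itemize}
\item \emph{The target inequality is too weak.} You aim for $\Div W\ge c(F_1+F_2)+Q$ with $Q$ merely positive semidefinite. The cutoff errors include $u^{r-1}|\nabla u|^3|\nabla\varphi|$ and $u^r|\nabla u|^2|\Delta\varphi|$. These can be absorbed only if the right side contains $\varepsilon_0u^{r-2}|\nabla u|^4$ and $\varepsilon_0v^{s-2}|\nabla v|^4$ with $\varepsilon_0>0$.
\item \emph{The local step does not close.} The same quartic terms drive the Gagliardo--Nirenberg bound $\int u^{t_u}\le C[1+(\int u^{r-2}|\nabla u|^4)^{\vartheta}]$ with $\vartheta<1$. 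Starting from the $L^1$ bound on $B_2$, this closes the iteration with a constant independent of the solution. Your alternative, via $\int_{B_R}v^p\le CR^{N-2-\alpha}$ and Sobolev on spheres, is not shown to close.
\item \emph{You need $p,q>1$.} The Picone square carries the coefficient $c(p-1)$, and the Jensen step in the $L^1$ bound also needs it. The paper obtains $q>1$ by showing that $q\le1$ lies in the Busca--Man\'asevich range.
\item \emph{Sign slip in the reduction.} With $p>q$ one has $\alpha>\beta$. So the remaining range is $\beta<(N-2)/2$, $\alpha<N-2$, not $\alpha<(N-2)/2<\beta$.
\end{itemize}
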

\par\smallskip
\begingroup
\centering
\includegraphics[width=0.86\textwidth]{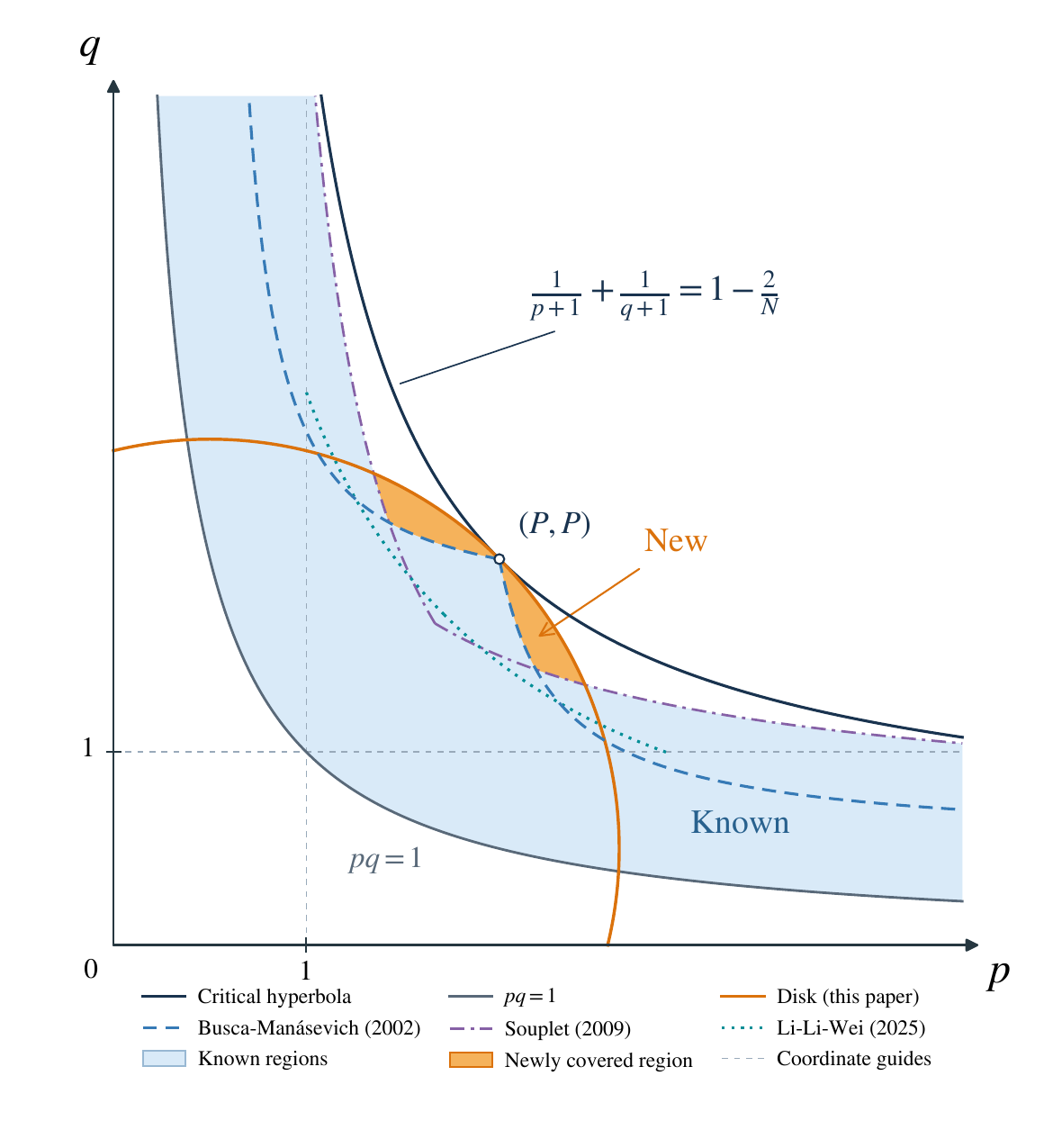}\par
% Local caption size: 10pt (article class at 12pt).
\let\normalsize\footnotesize
\captionof{figure}{The dark solid curve is the critical hyperbola,
the grey solid curve is $pq=1$,
and the orange solid curve is the disk boundary. Blue dashed, purple
dash-dotted and teal dotted curves represent
Busca-Man\'asevich~\cite{BM}, Souplet~\cite{Souplet} and
Li-Li-Wei~\cite{LLW}, respectively. Light blue indicates known
regions and orange the new region.}
\label{fig:regions}
\endgroup
\end{minipage}
\par\medskip

\Needspace{8\baselineskip}

\proofstep{Outline of the proof of Theorem \ref{thm:main}.}
\par
Set
\begin{equation}\label{eq:disk-margin}
G_N=\frac{2N^2}{(N-2)^2}
-\left(p-\frac2{N-2}\right)^2
-\left(q-\frac2{N-2}\right)^2,
\end{equation}
and, for $N\ge5$, define
\begin{equation}\label{eq:remaining}
\mathcal R_N=
\left\{(p,q):
\sqrt P\le q<P<p,\quad
q(p-P+1)\ge P,\quad G_N\ge0
\right\},
\end{equation}
where every pair in $\mathcal R_N$ satisfies $p>q>1$.
We will show symmetry and known Liouville criteria reduce the proof to only for
$N\ge5$ and $(p,q)\in\mathcal R_N$.

The main step is to construct vector fields $J_u,J_v$ (here $(u,v)$ is the solution of  \eqref{eq:system}) such that the divergence inequality is satisfied
\[
\operatorname{div}(J_u+\omega J_v)
\ge\varepsilon_0
\left(
u^{r-2}|\nabla u|^4
+v^{s-2}|\nabla v|^4
+u^rv^{2p}+u^{2q}v^s
\right),
\]
here $-2<r,s<0$ and $\omega,\varepsilon_0>0$, and all parameters
depend only on $N,p,q$. We choose the weights in order to give  the two
zeroth-order terms the same scaling degree. The coefficient
inequalities are needed to combine the two fields which will be verified by
elementary estimates and exact Bernstein coefficient calculations.

Testing this inequality against a cutoff function, we use the positive
quartic terms to absorb the cutoff errors. The local $L^1$ bound,
Sobolev embedding theorem, Gagliardo-Nirenberg interpolation inequality, 
and Young's inequality give an estimate
that can be iterated over concentric balls with increasing radii. This yields the uniform bound for any positive solutions of \eqref{eq:system}
\[
\int_{B_1}(u^rv^{2p}+u^{2q}v^s)\dd\le C(N,p,q).
\]
Both integrands have the same scaling degree $\nu>N$. Applying
the bound to the scaling solution $(R^\alpha u(Rx),R^\beta v(Rx))$ gives
\[
\int_{B_R}\bigl(u^rv^{2p}+u^{2q}v^s\bigr)\,dx
\le C(N,p,q)R^{N-\nu}\longrightarrow0
\qquad(R\to\infty).
\]
This contradicts the fact that the solution over $\mathbb{R}^N$ is positive..

\proofstep{Organization of the paper.}

Section~\ref{sec:reduction} reduces the exponent range by known
Liouville theorems. Section~\ref{sec:currents} proves the divergence
estimate: Subsection~\ref{sec:identity-combination} derives and combines
the component identities, and Subsection~\ref{sec:parameter-positivity}
chooses the parameters and verifies the coefficient inequalities.
Section~\ref{sec:local} obtains the local integral bound and
completes the proof of Theorem~\ref{thm:main} by scaling.
 Appendix~\ref{app:data} records the computed Bernstein
coefficient data.

\section{Reduction of the exponent range}\label{sec:reduction}

By symmetry and known Liouville theorems, it suffices to prove
Theorem~\ref{thm:main} for $N\ge5$ and $(p,q)\in\mathcal R_N$.
We also show that $\mathcal R_N$ has nonempty interior.

\subsection{Known Liouville criteria}

 We recall two Liouville theorems used in the reduction.

\begin{proposition}
\label{prop:Souplet}
Let $N\ge3$, $p,q>0$, and $pq>1$. Assume
\[
\frac1{p+1}+\frac1{q+1}>\frac{N-2}{N}.
\]
Then \eqref{eq:system} has no positive entire solution if either
$N=3,4$, or
\[
N\ge5,\qquad \max\{\alpha,\beta\}>N-3.
\]
\end{proposition}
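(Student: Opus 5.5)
This proposition is a recalled result, so the plan is to assemble it from the literature rather than prove it from scratch. The three-dimensional case is the theorem of Pol\'a\v cik, Quittner and Souplet~\cite{PQS}. The four-dimensional case, and the case $N\ge5$ with $\max\{\alpha,\beta\}>N-3$, are due to Souplet~\cite{Souplet}. I would first check that the hypotheses match. For $pq>1$ the subcritical condition \eqref{eq:subcritical} is equivalent to $\alpha+\beta>N-2$, as noted after \eqref{eq:scaling}. I would also handle the range $\max\{\alpha,\beta\}\ge N-2$ separately, where Serrin--Zou~\cite{SerrinZou96} already gives nonexistence without any subcriticality. This matters because Souplet's argument is usually stated under the standing assumption $\alpha,\beta<N-2$; the case split is therefore $\max\{\alpha,\beta\}\ge N-2$ (Serrin--Zou) versus $N-3<\max\{\alpha,\beta\}<N-2$ (Souplet).

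\emph{Dimension four.} Here $N-3=1$, and I would show that $\alpha+\beta>2$ forces $\max\{\alpha,\beta\}>1$. This is immediate: if both were $\le1$ then $\alpha+\beta\le2$. So $N=4$ is a special case of the general criterion.

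\emph{Dimension three.} Here $N-3=0$ and $\alpha,\beta>0$ automatically, so the criterion covers $N=3$ as well. If one instead wants to cite~\cite{PQS} directly, note that their reduction is stated for $p,q>1$. The case $\min\{p,q\}\le1$ with $pq>1$ then needs to be routed through Serrin--Zou or Souplet's general statement, which do not require $p,q\ge1$.

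\emph{Sketch of Souplet's argument for $N\ge5$.} Assume without loss of generality $\alpha\ge\beta$, i.e.\ $p\ge q$.
\begin{enumerate}
\item Use the comparison $v^{q+1}\le C\,u^{p+1}$-type estimate.
\item Derive a Rellich--Pohozaev identity on balls $B_R$. Its interior terms have a sign by subcriticality, and its boundary terms are integrals over $\partial B_R$.
\item Bound those boundary terms via Sobolev and interpolation inequalities on $S^{N-1}$, using the averaged decay $\int_{B_R}u^q\le CR^{N-2-\alpha}$ (from the equation).
\item Run a feedback argument: the decay gains are exactly what $\max\{\alpha,\beta\}>N-3$ ensures, namely that the exponents of $R$ in the boundary terms become negative along a sequence $R\to\infty$. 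This forces the Pohozaev integral to vanish, a contradiction.
\end{enumerate}

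The main obstacle is purely bookkeeping: checking that the citations' hypotheses (signs, $p,q\ge1$ versus $>0$, strict versus non-strict inequalities) align with the statement as formulated here. The substantive analysis is entirely contained in~\cite{Souplet,PQS,SerrinZou96}.
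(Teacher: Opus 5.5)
Your proposal follows the paper's route: the paper gives no argument beyond citing Serrin--Zou and Pol\'a\v cik--Quittner--Souplet for $N=3$ and Souplet's Theorems~1 and~2 for $N=4$ and $N\ge5$. Your hypothesis checks are correct. Subcriticality is equivalent to $\alpha+\beta>N-2$, which gives $\max\{\alpha,\beta\}>(N-2)/2\ge N-3$ when $N\le4$. Serrin--Zou handles $\max\{\alpha,\beta\}\ge N-2$. The routing of $\min\{p,q\}\le1$ for $N=3$ works, a check you left unstated: $q\le1<p$ gives $\alpha\ge 2(p+1)/(p-1)>2>N-2$.

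Your optional sketch of Souplet's argument has two slips, neither of which affects the citation-based proof. For $p\ge q$ the comparison is $v^{p+1}/(p+1)\le u^{q+1}/(q+1)$. The averaged bound is $\int_{B_R}u^q\le CR^{N-2-\beta}$, since $q\alpha=\beta+2$; it is $\int_{B_R}v^p$ that is $O(R^{N-2-\alpha})$.
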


The case $N=3$ follows from Serrin-Zou~\cite{SerrinZou96} and
Pol\'a\v cik-Quittner-Souplet~\cite{PQS}; the cases $N=4$ and
$N\ge5$ are due to Souplet~\cite[Theorems 1 and 2]{Souplet}.

\begin{proposition}[Busca-Man\'asevich {\cite[Theorem 2.1]{BM}}]
\label{prop:BM}
Suppose $N\ge3$, $p,q>0$, $pq>1$, and
\[
\alpha,\beta\in[(N-2)/2,N-2),\qquad
(\alpha,\beta)\ne((N-2)/2,(N-2)/2).
\]
Then \eqref{eq:system} has no positive entire solution.
\end{proposition}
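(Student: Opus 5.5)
Proposition~\ref{prop:BM} is quoted from Busca--Man\'asevich~\cite[Theorem 2.1]{BM}, and in the paper it is used as a black box. If I had to reprove it, I would follow their strategy: an Emden--Fowler change of variables to the cylinder, followed by moving planes in the axial variable.

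\textbf{Step 1: change of variables.} Write $x=e^t\theta$ with $t\in\R$ and $\theta\in S^{N-1}$, and set
\[
u(x)=|x|^{-\alpha}U(t,\theta),\qquad v(x)=|x|^{-\beta}V(t,\theta).
\]
The scaling identities $\alpha+2=p\beta$ and $\beta+2=q\alpha$ remove all exponential factors, so the system becomes autonomous:
\[
U_{tt}+(N-2-2\alpha)U_t+\Delta_\theta U-\alpha(N-2-\alpha)U+V^p=0,
\]
\[
V_{tt}+(N-2-2\beta)V_t+\Delta_\theta V-\beta(N-2-\beta)V+U^q=0.
\]
The hypotheses enter here in two ways. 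The condition $\alpha,\beta<N-2$ makes the zeroth-order coefficients $-\alpha(N-2-\alpha)$ and $-\beta(N-2-\beta)$ strictly negative, which is exactly what a maximum principle needs. The condition $\alpha,\beta\ge(N-2)/2$ makes both drift coefficients $N-2-2\alpha$ and $N-2-2\beta$ nonpositive, so the drifts have a common sign. Regularity of $u,v$ at the origin, together with $\alpha,\beta>0$, gives $U,V\to0$ as $t\to-\infty$, uniformly in $\theta$.

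\textbf{Step 2: moving planes in $t$.} For $\lambda\in\R$ compare $(U,V)$ with the reflected pair $(U,V)(2\lambda-t,\theta)$ on the half-cylinder $t<\lambda$, via the differences
\[
W_\lambda=U(2\lambda-t,\theta)-U,\qquad Z_\lambda=V(2\lambda-t,\theta)-V .
\]
Reflection reverses the sign of the first-order terms. Because the drifts are nonpositive, the equations for $(W_\lambda,Z_\lambda)$ form a cooperative linear system with a residual drift term of the right sign. Positivity of $p$ and $q$ gives the cooperative coupling through mean-value coefficients $pV_*^{p-1}$ and $qU_*^{q-1}$.

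\begin{itemize}
\item \emph{Start.} For $\lambda$ very negative, the decay at $-\infty$ together with a maximum principle for cooperative systems on narrow or decaying domains gives $W_\lambda,Z_\lambda\ge0$. This uses a weighted maximum principle, exploiting the negative zeroth-order coefficients.
\item \emph{Continuation.} Move $\lambda$ up to the supremal value $\lambda^*$.
\item \emph{Conclusion.} If $\lambda^*<\infty$, the strong maximum principle forces $W_{\lambda^*}\equiv Z_{\lambda^*}\equiv0$. Symmetry of $(U,V)$ about $t=\lambda^*$ is incompatible with a nonzero drift, and also with $U\to0$ at $-\infty$ while $U>0$. If $\lambda^*=+\infty$, then $U$ and $V$ are nondecreasing in $t$ everywhere.
\end{itemize}

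\textbf{Step 3: contradiction in the monotone case.} If $U,V$ are nondecreasing in $t$, they have limits as $t\to+\infty$, provided one first has boundedness of $U,V$, which comes from the upper estimates $u\le C|x|^{-\alpha}$. Those limits are $t$-independent solutions of the limiting system on $S^{N-1}$. Integrating over the sphere and using the sign of $\alpha(N-2-\alpha)$ forces the limits to be positive constants solving an algebraic system. One then shows that monotone convergence to these constants contradicts $U\to0$ at $-\infty$ combined with the energy (Lyapunov) functional along $t$. That functional is monotone because the drift is nonpositive, and the dissipation is strictly positive unless the drift vanishes. The drift vanishes for both equations only at $\alpha=\beta=(N-2)/2$, which is precisely the excluded critical pair.

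\textbf{Main obstacle.} The delicate step is starting the moving plane and controlling the process at $t=+\infty$. This requires a priori decay bounds $u\le C|x|^{-\alpha}$ and $v\le C|x|^{-\beta}$ for arbitrary positive solutions, so that $U,V$ are bounded on the cylinder, and no growth assumption is available. I would obtain these via Serrin--Zou type integral estimates on annuli, or a doubling/rescaling argument.
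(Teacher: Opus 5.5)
\textbf{Verdict: the framework matches the paper's description, but Step 3 has a genuine gap.}

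The paper gives no proof of this statement. It quotes Busca--Man\'asevich and says only that their argument uses moving planes after passing to the cylinder $\R\times S^{N-1}$. That is the framework of your Steps 1--2, and your cylinder equations are correct.

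\textbf{Why Step 3 fails.}
\begin{itemize}
\item The pointwise bounds $u\le C|x|^{-\alpha}$, $v\le C|x|^{-\beta}$, which you need for boundedness of $U,V$, are not available for arbitrary positive solutions. Doubling and rescaling produce such bounds only from a Liouville theorem for bounded entire solutions of the same system, which is what is being proved. Serrin--Zou type estimates are only averaged, e.g.\ $\int_{B_R}v^p\le CR^{N-2-\alpha}$, and superharmonicity turns averages into lower, not upper, pointwise bounds.
\item Even granting boundedness, the limits as $t\to+\infty$ solve an elliptic system on $S^{N-1}$. Integrating over the sphere does not force them to be constant.
\item The Lyapunov functional is never exhibited. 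For $\alpha\ne\beta$ the zeroth-order coefficients $\alpha(N-2-\alpha)$ and $\beta(N-2-\beta)$ differ, so the natural cross energy $\int_{S^{N-1}}(U_tV_t-\nabla_\theta U\cdot\nabla_\theta V-\cdots)\,d\theta$ picks up non-dissipative terms. Comparing its values at $t=\pm\infty$ would also need information on the limit profiles that you do not have. So no contradiction is actually derived.
\end{itemize}

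\textbf{The missing idea is translation invariance, which makes Step 3 unnecessary.}
\begin{itemize}
\item Run the procedure with the cylinder centered at an arbitrary $x_0$, i.e.\ with $e^{\alpha t}u(x_0+e^t\theta)$ and $e^{\beta t}v(x_0+e^t\theta)$.
\item Your symmetry argument excludes $\lambda^*<\infty$ at every center; this is where $(\alpha,\beta)\ne((N-2)/2,(N-2)/2)$ enters. Hence $\rho\mapsto\rho^\alpha u(x_0+\rho\theta)$ is nondecreasing for all $x_0$ and $\theta$.
\item Given $x\ne y$, put $e=(x-y)/|x-y|$ and $x_0=y-Re$. Then $(R+|x-y|)^\alpha u(x)\ge R^\alpha u(y)$, and letting $R\to\infty$ gives $u(x)\ge u(y)$.
\item By symmetry $u$ is constant, contradicting $-\Delta u=v^p>0$. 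No decay estimate and no limit at $t=+\infty$ is used.
\end{itemize}

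\textbf{Two smaller points in Step 2 also need repair.}

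First, the sign of the drift does not by itself give a supersolution. With $a=N-2-2\alpha$, on $\{t<\lambda\}$ the difference satisfies
\[
\partial_{tt}W_\lambda-a\,\partial_tW_\lambda+\Delta_\theta W_\lambda-\alpha(N-2-\alpha)W_\lambda+V(2\lambda-t,\theta)^p-V^p=2a\,U_t .
\]
So the residual term has the good sign only where $U_t\ge0$. This must be propagated along the procedure: $W_\mu\ge0$ for all $\mu\le\lambda$ gives $U_t\ge0$ on $t\le\lambda$, and Hopf's lemma extends this slightly past $\lambda$.

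Second, a small-coupling maximum principle near $t=-\infty$ is not reliable when $\min\{p,q\}<1$. This does occur in this range when $N\ge7$, and the paper itself applies the proposition with $q\le1$. Instead, start the procedure by direct comparison:
\begin{itemize}
\item use $U(t)\le Ce^{\alpha t}$ for $t\le0$ and $\partial_tU>0$ for $t\ll0$;
\item compare with the superharmonic bound $u(x)\ge(\min_{\partial B_1}u)|x|^{2-N}$ for $|x|\ge1$.
\end{itemize}
This gives $W_\lambda\ge0$ for $\lambda\ll0$, and it also uses $2\alpha\ge N-2$.
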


\begin{remark}
Strict subcriticality is also required when applying Souplet's theorem.
\end{remark}

\subsection{Reduction to the remaining region}

We use symmetry and the classical criteria to 
%reduce Theorem~\ref{thm:main} to $\mathcal R_N$  
show that we only need to give the proof
of Theorem~\ref{thm:main}  for $N\ge5$ and
$(p,q)\in\mathcal R_N$. We first verify strict
subcriticality throughout the disk, except at the excluded pair $(P,P)$.

\begin{proposition}[Reduction to $\mathcal R_N$]\label{prop:reduction}
To prove Theorem~\ref{thm:main}, it suffices to prove nonexistence
for $N\ge5$ and $(p,q)\in\mathcal R_N$.
\end{proposition}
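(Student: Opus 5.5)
The plan is to verify, in order, that every admissible pair in the disk is strictly subcritical; that the cases $N=3,4$ are settled by Proposition~\ref{prop:Souplet}; and that for $N\ge5$ the pairs left over by Propositions~\ref{prop:Souplet} and~\ref{prop:BM} lie, after the symmetry $(u,v,p,q)\mapsto(v,u,q,p)$, inside $\mathcal R_N$.

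\emph{Strict subcriticality.} Write $a=p-\frac2{N-2}$, $b=q-\frac2{N-2}$ and $c=\frac N{N-2}$. Clearing the positive denominators $N(p+1)(q+1)$ should show that \eqref{eq:subcritical} is equivalent to $ab<c^2$; this is the same algebra that turns \eqref{eq:critical} into \eqref{L-Z-9-21}, and I will check the sign of each step. Condition \eqref{eq:disk} reads $a^2+b^2\le 2c^2$, so $ab\le\frac12(a^2+b^2)\le c^2$. Equality forces $a=b$ and $a^2=c^2$, hence $a=b=\pm c$. The choice $a=b=c$ is exactly $(p,q)=(P,P)$, which is excluded. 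The choice $a=b=-c$ gives $p=q=\frac{2-N}{N-2}=-1$, which is not allowed since $p,q>0$. So every pair in \eqref{eq:disk} is strictly subcritical, and Proposition~\ref{prop:Souplet} disposes of $N=3,4$.

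\emph{Case analysis for $N\ge5$.} Interchanging $(u,p)$ with $(v,q)$ preserves both the system and the disk, so I may assume $p\ge q$. From \eqref{eq:scaling}, $\alpha-\beta=2(p-q)/(pq-1)\ge0$, so $\alpha=\max\{\alpha,\beta\}$. If $\alpha>N-3$, Proposition~\ref{prop:Souplet} applies. Otherwise $\alpha\le N-3<N-2$. If in addition $\beta\ge(N-2)/2$, then $\alpha\ge\beta\ge(N-2)/2$, and Proposition~\ref{prop:BM} applies. Its excluded pair $\alpha=\beta=(N-2)/2$ corresponds to $p=q$ with $2(p+1)/(p^2-1)=(N-2)/2$, i.e.\ $p=P$, which is excluded. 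So the pairs still to be handled satisfy $\alpha\le N-3$ and $\beta<(N-2)/2$. Using $P-1=4/(N-2)$, the condition $\beta<(N-2)/2$ is equivalent to $4(q+1)<(N-2)(pq-1)$, that is, to $q(p-P+1)>P$. This gives the third condition in \eqref{eq:remaining}, even strictly.

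\emph{The order conditions.} Next I derive $q<P<p$. If $p\le P$, then $q\le p\le P$ and $p-P+1\le1$, so $q(p-P+1)\le P$, a contradiction; hence $p>P$. If $q\ge P$, then $a,b\ge c$ gives $ab\ge c^2$, contradicting strict subcriticality; hence $q<P$.

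\emph{Main obstacle: $q\ge\sqrt P$.} The remaining inequality $q\ge\sqrt P$ is the step I expect to be hardest. I would first try to extract it from $\alpha\le N-3$, i.e.\ $2(p+1)\le(N-3)(pq-1)$, which bounds $q$ from below in terms of $p$. I would combine this with the disk constraint $p\le\frac{2+\sqrt2N}{N-2}$, which limits how large $p$ can be and so keeps $q$ from being too small. Concretely, I would minimise $q$ over the region cut out by $\alpha\le N-3$, $q(p-P+1)>P$ and \eqref{eq:disk}; the minimum should sit at a boundary intersection point, which can be compared with $\sqrt P$ directly. If this comparison fails for some $N$, the fallback is to use Proposition~\ref{prop:Souplet} more sharply via the equivalent form $\alpha+\beta>N-2$ together with $\beta<(N-2)/2$, which forces $\alpha>(N-2)/2$.
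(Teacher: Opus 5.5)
You prove everything except the condition $q\ge\sqrt P$, and that step has a genuine gap. The rest of your proposal is correct: the strict subcriticality argument, the translation of $\beta<(N-2)/2$ into $q(p-P+1)>P$, and your derivations of $p>P$ and $q<P$. The paper gets $q<P$ from $p+q<2P$ instead, but your route also works.

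You leave $q\ge\sqrt P$ unproved, and both routes you suggest aim at the wrong constraint. The hypothesis $\alpha\le N-3$ cannot force it. For $N=20$, so $P=\tfrac{11}{9}$, take $(p,q)=(P,\tfrac{21}{20})$. It lies in the disk, since $G_N(P,q)=(P-q)(q+1)>0$, and $\alpha=\tfrac{800}{51}<17=N-3$. Yet $q<\sqrt P$; this pair is removed only by $\beta\ge(N-2)/2$. Your fallback fails too: $\alpha>(N-2)/2$ is equivalent to $p(q-P+1)<P$, which bounds $q$ from above, not below. Finally, the minimization is only asserted. You never locate the boundary intersection point or rule out the other boundary pieces of the region.

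The missing idea is that $q>\sqrt P$ follows from the disk condition and $\beta<(N-2)/2$ alone, by evaluating $G_N$ on the boundary curve of the latter. Put $p_B(q)=P-1+P/q$, so that $q(p-P+1)>P$ means $p>p_B(q)$. From $G_N=2P+(P-1)(p+q)-p^2-q^2$ one gets
\[
G_N(p_B(q),q)-G_N(p,q)=\bigl(p-p_B(q)\bigr)\bigl[p+p_B(q)-(P-1)\bigr]>0,
\]
because $p_B(q)-(P-1)=P/q>0$. Hence $G_N(p_B(q),q)>G_N(p,q)\ge0$. Using $p_B(q)^2-(P-1)p_B(q)=P(P-1)/q+P^2/q^2$, a direct expansion gives
\[
q^2\,G_N(p_B(q),q)=(P-q)(q^2-P)(q+1).
\]
Since $q<P$, this forces $q^2>P$. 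This is the intersection-point comparison you had in mind: on the curve $q(p-P+1)=P$, the circle is met exactly at $q=\sqrt P$ and at the tangency point $(P,P)$. The monotonicity of $G_N$ in $p$ beyond $p_B(q)$ turns it into a proof without any minimization over boundary arcs.
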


\begin{proof}
\proofstep{Step 1: Strict subcriticality and classical criteria.}

Proposition~\ref{prop:Souplet} requires strict subcriticality.
We verify this condition for the exponent pairs in
Theorem~\ref{thm:main}.
Since $2/(N-2)=(P-1)/2$ and $N/(N-2)=(P+1)/2$,
expanding the squares in \eqref{eq:disk-margin} gives
\[
\begin{aligned}
G_N&=\frac{(P+1)^2}{2}
-\left(p-\frac{P-1}{2}\right)^2
-\left(q-\frac{P-1}{2}\right)^2\\
&=\frac{(P+1)^2-(P-1)^2}{2}
+(P-1)(p+q)-p^2-q^2\\
&=2P+(P-1)(p+q)-p^2-q^2.
\end{aligned}
\]
Adding $(p-q)^2=p^2-2pq+q^2$ to both sides gives
\[
G_N+(p-q)^2=2P+(P-1)(p+q)-2pq.
\]
Putting the subcriticality gap over a common denominator gives
\begin{equation}\label{eq:gap}
\begin{aligned}
\frac1{p+1}+\frac1{q+1}-\frac{N-2}{N}
&=\frac{N(p+q+2)-(N-2)(pq+p+q+1)}
        {N(p+1)(q+1)}\\
&=\frac{2(p+q)+N+2-(N-2)pq}{N(p+1)(q+1)}\\
&=\frac{(N-2)[G_N+(p-q)^2]}{2N(p+1)(q+1)}.
\end{aligned}
\end{equation}
The last equality uses $(N-2)P=N+2$ and $(N-2)(P-1)=4$.
For $N\ge3$ and $p,q>0$, the denominator and the factor $N-2$
in \eqref{eq:gap} are positive. Since $G_N\ge0$ in the disk,
the gap is nonnegative. Equality would require $p=q$ and
$G_N=0$. But
\[
G_N(p,p)=2(P-p)(p+1),
\]
so $p>0$ forces $p=P$. The critical pair $(P,P)$ is explicitly
excluded by the condition $(p,q)\ne(P,P)$ in
Theorem~\ref{thm:main}. Hence every exponent pair allowed by
the theorem is strictly subcritical.

Since strict subcriticality has now been established,
Proposition~\ref{prop:Souplet} gives nonexistence for $N=3,4$
and, for $N\ge5$, whenever $\max\{\alpha,\beta\}>N-3$.
We may therefore restrict the rest of the proof to
\begin{equation}\label{eq:after-Souplet}
N\ge5,\qquad \max\{\alpha,\beta\}\le N-3<N-2.
\end{equation}
In particular, $\alpha,\beta<N-2$, which is the upper bound
required in Proposition~\ref{prop:BM}.

\proofstep{Step 2: Reduction to $p>q$.}

If $p=q$, then $pq>1$ gives $p>1$, while
$G_N(p,p)=2(P-p)(p+1)\ge0$ and the excluded critical pair give
$p<P$. Hence
\[
\alpha=\beta=\frac2{p-1}>\frac2{P-1}=\frac{N-2}{2}.
\]
Since \eqref{eq:after-Souplet} also gives $\alpha,\beta<N-2$,
Proposition~\ref{prop:BM} applies.

We may therefore assume $p\ne q$. Interchanging
$(u,v,p,q)$ with $(v,u,q,p)$ if necessary, we take $p>q$.
From \eqref{eq:scaling} and $pq>1$,
\[
\alpha-\beta=\frac{2(p-q)}{pq-1}>0.
\]
We next show that $q<P$. From the expression for $G_N$ in
Step~1,
\[
\begin{aligned}
2G_N+(p-q)^2
&=4P+2(P-1)(p+q)-2(p^2+q^2)+(p-q)^2\\
&=4P+2(P-1)(p+q)-p^2-2pq-q^2\\
&=4P+2(P-1)(p+q)-(p+q)^2\\
&=(2P-p-q)(p+q+2).
\end{aligned}
\]
Here $G_N\ge0$ by the disk condition and $(p-q)^2>0$ because
$p>q$. Thus the left-hand side is strictly positive.
Since $p+q+2>0$, we obtain $p+q<2P$, and hence
$q<(p+q)/2<P$.

\proofstep{Step 3: The Busca-Man\'asevich region and the case $q\le1$.}

Set $p_B(q)=P-1+P/q$. To express the threshold
$\beta\ge(N-2)/2$ in terms of $p,q$, compute
\begin{equation}\label{eq:BM-gap}
\begin{aligned}
\beta-\frac{N-2}{2}
&=\frac{4(q+1)-(N-2)(pq-1)}{2(pq-1)}\\
&=\frac{(N-2)[(P-1)q+P-pq]}{2(pq-1)}\\
&=\frac{(N-2)q[p_B(q)-p]}{2(pq-1)}.
\end{aligned}
\end{equation}
The factors outside $p_B(q)-p$ are positive. Thus
$p\le p_B(q)$ is equivalent to $\beta\ge(N-2)/2$.
In that case,
\[
\frac{N-2}{2}\le\beta<\alpha\le N-3<N-2,
\]
and Proposition~\ref{prop:BM} applies.

This includes $0<q\le1$. Indeed,
\begin{equation}\label{eq:small-exponent-coverage}
\begin{aligned}
p_B(q)-(2P-q)
&=P-1+\frac Pq-2P+q\\
&=\frac{q^2-(P+1)q+P}{q}
=\frac{(q-1)(q-P)}q\ge0
\qquad(0<q\le1).
\end{aligned}
\end{equation}
Here $q-1\le0$ and $q-P<0$. Since $p+q<2P$, we obtain
$p<2P-q\le p_B(q)$, as required.

\proofstep{Step 4: Reduction to the coefficient region $\mathcal R_N$.}
The remaining case has $p>p_B(q)$ and $q>1$. Since $q<P$,
\[
p_B(q)-P=\frac{P-q}{q}>0,
\]
so $q<P<p$. The disk inequality is
\[
-G_N(p,q)=p^2+q^2-(P-1)(p+q)-2P\le0.
\]
Its value at $p=p_B(q)$ is smaller, because
\[
-G_N(p,q)+G_N(p_B(q),q)
=(p-p_B(q))[p+p_B(q)-(P-1)]>0.
\]
Hence $-G_N(p_B(q),q)<0$.

To calculate this boundary value, first note that
\[
p_B(q)^2-(P-1)p_B(q)
=p_B(q)\frac Pq
=\frac{P(P-1)}q+\frac{P^2}{q^2}.
\]
Substitution and multiplication by $q^2$ give
\begin{equation}\label{eq:boundary-factor}
\begin{aligned}
q^2[-G_N(p_B(q),q)]
&=q^4-(P-1)q^3-2Pq^2+P(P-1)q+P^2\\
&=(q^2-P)[q^2-(P-1)q-P]\\
&=(q-P)(q^2-P)(q+1)<0.
\end{aligned}
\end{equation}
Because $q-P<0$ and $q+1>0$, this implies $q^2>P$, so
$q>\sqrt P$. Finally, $p>p_B(q)$ yields
\[
q(p-P+1)>q[p_B(q)-P+1]=P.
\]
Thus every remaining pair satisfies
\[
\sqrt P<q<P<p,\qquad q(p-P+1)>P,\qquad G_N\ge0,
\]
and hence belongs to $\mathcal R_N$ as defined in
\eqref{eq:remaining}. This proves the reduction.
\end{proof}

\subsection{Nonemptiness of the remaining region}

\begin{lemma}\label{lem:region-nonempty}
For every \(N\ge5\), the set \(\mathcal R_N\) has nonempty interior.
\end{lemma}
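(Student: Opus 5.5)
The plan is to exhibit one pair $(p_0,q_0)$ at which every defining inequality of $\mathcal R_N$ in \eqref{eq:remaining} holds \emph{strictly}. Since the functions $q$, $p$, $q(p-P+1)$ and $G_N(p,q)$ are continuous in $(p,q)$, the strict inequalities then persist on a small open ball around $(p_0,q_0)$, and that ball lies in $\mathcal R_N$. The natural place to look is just to the right of the Busca--Man\'asevich curve $p=p_B(q)=P-1+P/q$ from Step~3 of Proposition~\ref{prop:reduction}. There we can reuse the factorization \eqref{eq:boundary-factor}.

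\textbf{Step 1: choice of $q_0$.} Since $N\ge5$ we have $P=(N+2)/(N-2)>1$, so $\sqrt P<P$. Fix any $q_0$ with $\sqrt P<q_0<P$, for instance $q_0=(\sqrt P+P)/2$. This gives the strict inequalities $\sqrt P<q_0<P$.

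\textbf{Step 2: the boundary point $(p_B(q_0),q_0)$.} From Step~4 of Proposition~\ref{prop:reduction},
\[
p_B(q_0)-P=\frac{P-q_0}{q_0}>0 ,\qquad q_0\bigl(p_B(q_0)-P+1\bigr)=P .
\]
By \eqref{eq:boundary-factor},
\[
G_N(p_B(q_0),q_0)=-\frac{(q_0-P)(q_0^2-P)(q_0+1)}{q_0^2}>0 ,
\]
because $q_0-P<0$, $q_0^2-P>0$ and $q_0+1>0$. So at this point every condition is strict except $q(p-P+1)\ge P$, which holds with equality.

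\textbf{Step 3: perturb $p$.} Set $p_0=p_B(q_0)+\delta$ with $\delta>0$. Then $q_0(p_0-P+1)=P+q_0\delta>P$ and $p_0>P$ both hold strictly. Since $G_N(\cdot,q_0)$ is continuous and positive at $p_B(q_0)$, we still have $G_N(p_0,q_0)>0$ for $\delta$ small enough. Now all defining inequalities of $\mathcal R_N$ hold strictly at $(p_0,q_0)$. By continuity they hold on a neighborhood of this point, so $\mathcal R_N$ has nonempty interior.

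The only substantive point is ensuring $G_N>0$ strictly along the curve $p=p_B(q)$. This is exactly what the factorization \eqref{eq:boundary-factor} provides on the whole range $\sqrt P<q<P$. Everything else is continuity, so I expect no real obstacle. If one wants explicitness, $\delta$ can be bounded by a simple quadratic estimate of $G_N(p_B(q_0)+\delta,q_0)-G_N(p_B(q_0),q_0)$ in $\delta$.
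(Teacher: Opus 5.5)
Your proof is correct and follows the paper's argument essentially step for step. The paper also starts from the Busca--Man\'asevich point $(p_B(q),q)$ with $\sqrt P<q<P$, uses the factorization $q^2[-G_N(p_B(q),q)]=(q-P)(q^2-P)(q+1)$ to place it strictly inside the disk, and then moves $p$ slightly to the right to make $q(p-P+1)>P$ strict; your closing step, that all defining inequalities hold strictly at $(p_0,q_0)$ and hence on a two-dimensional ball, states the openness conclusion a bit more carefully than the paper does.
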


\begin{proof}
Let
\[
p_B(q)=P-1+\frac Pq.
\]
For every \(q\in(\sqrt P,P)\),
\[
p_B(q)-P=\frac{P-q}{q}>0,\qquad
p_B(q)-q=\frac{(P-q)(q+1)}q>0
\]
and
\[
p_B(q)^2+q^2-(P-1)[p_B(q)+q]-2P
=\frac{(q-P)(q^2-P)(q+1)}{q^2}<0.
\]
Thus \((p_B(q),q)\) lies strictly inside the disk, with
\(q<p_B(q)\), and it satisfies
\(q[p_B(q)-P+1]=P\). By continuity, all \(p>p_B(q)\)
sufficiently close to \(p_B(q)\) satisfy
\[
q<p,\qquad q(p-P+1)>P,\qquad G_N>0.
\]
These points form a nonempty open subset of \(\mathcal R_N\).
\end{proof}

\begin{remark}
For every $N\ge5$, the disk contains a nonempty open set of
exponents $p>q>1$, which is not covered by the Busca-Man\'asevich,
Souplet, or explicit Li-Li-Wei conditions recalled above.
To see this, take $t>0$ sufficiently small and set
\[
p_t=\frac{2}{N-2}+\frac{N}{N-2}\frac{1+2t-t^2}{1+t^2},
\qquad
q_t=\frac{2}{N-2}+\frac{N}{N-2}\frac{1-2t-t^2}{1+t^2}.
\]
This pair lies on the circle and satisfies $p_t>P>q_t>1$.
As $t\downarrow0$,
\[
\begin{gathered}
\beta_t-\frac{N-2}{2}=-\frac{N-2}{2}t+O(t^2),
\qquad \alpha_t\longrightarrow\frac{N-2}{2}<N-3,\\
\frac1{p_t+1}+\frac1{q_t+1}-\frac{N-2}{N}
=\frac{2(N-2)t^2}{N(1-t^2)}.
\end{gathered}
\]
Thus, for $t>0$ sufficiently small, we have
$\beta_t<(N-2)/2$, $\max\{\alpha_t,\beta_t\}<N-3$, and the positive
subcriticality gap is less than $2/N^2$. These strict
inequalities exclude all three criteria and persist in a
neighborhood of $(p_t,q_t)$. Intersecting this neighborhood with
the interior of the disk gives the required open set.
\end{remark}

\Needspace{6\baselineskip}
\section{A divergence inequality}\label{sec:currents}

Throughout this section, let $N\ge5$ and $p,q>1$.

In Subsection~\ref{sec:identity-combination}, we derive the component
identities and a criterion for combining them. In
Subsection~\ref{sec:parameter-positivity}, we choose the parameters
and verify the coefficient inequalities.

\begin{proposition}[Divergence inequality]
\label{prop:circle-coefficients}
Let $(p,q)\in\mathcal R_N$. Set
\begin{equation}\label{eq:chosen-weights}
\tilde{\kappa}=\frac{N-3}{N},\qquad
r=\tilde{\kappa}(q+1)-2,\qquad s=\tilde{\kappa}(p+1)-2.
\end{equation}

Then $-2<r,s<0$; there are vector fields $J_u,J_v$ and constants $\omega,\varepsilon_0>0$
such that every positive classical solution  $(u,v)$ of \eqref{eq:system} satisfies
\begin{equation}\label{eq:constructed-coercivity}
\Div(J_u+\omega J_v)\ge\varepsilon_0\bigl(
u^{r-2}|\nabla u|^4+v^{s-2}|\nabla v|^4
+u^rv^{2p}+u^{2q}v^s\bigr).
\end{equation}
All coefficients (to be defined in proposition \ref{prop:criterion}) and constants $\omega, \varepsilon_0$  depend only on $N,p,q$.
\end{proposition}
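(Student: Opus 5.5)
First I check $-2<r,s<0$. With $\tilde\kappa=(N-3)/N\in(0,1)$ and $q>1$ we have $r>-2$. The upper bound $r<0$ is equivalent to $q+1<2N/(N-3)$. On $\mathcal R_N$ we have $q<P$, so
\[
q+1<P+1=\frac{2N}{N-2}<\frac{2N}{N-3},
\]
hence $r<0$. For $s<0$ I need $p+1<2N/(N-3)$, which uses the disk. From Step 2 of Proposition~\ref{prop:reduction}, $p+q<2P$, and $q>\sqrt P$, so $p<2P-\sqrt P$. This crude bound may not suffice for every $N$, so I will instead maximize $p$ on the disk directly: $p\le 2/(N-2)+\sqrt2\,N/(N-2)$. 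I then compare this with $2N/(N-3)-1=(N+3)/(N-3)$; this comparison is elementary in $N$, and if it fails for small $N$ I would add the constraint $q>\sqrt P$.

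For the vector fields I follow the Gidas--Spruck / Serrin--Zou pattern, one component at a time. Set
\[
J_u=u^{r}\Bigl(\nabla u\,\tfrac{|\nabla u|^2}{u}a_1+(\nabla^2u\,\nabla u-\Delta u\,\nabla u)a_2+\nabla u\,v^{p}a_3+\dots\Bigr),
\]
built from $u^{r-1}|\nabla u|^2\nabla u$, $u^{r}\nabla\tfrac12|\nabla u|^2$, $u^{r}\Delta u\,\nabla u$ and $u^{r+1}v^p\nabla u$. Computing $\Div J_u$ with the Bochner formula gives:
\begin{itemize}
\item the Hessian term $|\nabla^2u|^2-\frac1N(\Delta u)^2$, controlled from below by the trace-free Hessian inequality (Obata type), in which the negative part is completed into a square;
\item quartic terms $u^{r-2}|\nabla u|^4$;
\item cross terms $u^{r-1}|\nabla u|^2v^p$ and $u^{r}v^{2p}$;
\item a mixed term $u^{r}v^{p-1}\nabla u\cdot\nabla v$, coming from $\nabla(v^p)$.
\end{itemize}
The same construction for $J_v$, with weight $s$ and the roles $p\leftrightarrow q$ exchanged, produces $v^{s-2}|\nabla v|^4$, $v^{s}u^{2q}$ and the mixed term $v^{s}u^{q-1}\nabla v\cdot\nabla u$.

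The choice of $\tilde\kappa$ makes $u^{r}v^{2p}$ and $u^{2q}v^s$ scale identically: both have degree $\alpha r+2p\beta=\beta s+2q\alpha$ under $(\alpha,\beta)$ scaling. The idea is that the mixed terms of the two fields can then be dominated jointly.

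Next I isolate, as a separate criterion (Proposition~\ref{prop:criterion}), the purely algebraic statement to be proved. After the Hessian is completed into squares, the integrand of $\Div(J_u+\omega J_v)$ is bounded below by a quadratic form $Q$ in the monomial vector
\[
\bigl(u^{\frac r2-1}|\nabla u|^2,\ u^{\frac r2}v^p,\ v^{\frac s2-1}|\nabla v|^2,\ v^{\frac s2}u^q\bigr),
\]
plus the mixed gradient terms. I handle the mixed terms by Young's inequality, which splits them into the quartic and zeroth-order monomials; this is exactly where the equal-scaling weights are needed, since the product $u^{r}v^{p-1}|\nabla u||\nabla v|$ factorizes into these monomials. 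The requirement is then that $Q-\varepsilon_0 I$ be positive semidefinite. This reduces to a finite list of polynomial inequalities in $(p,q,N)$ and the free coefficients $a_i,b_i,\omega$.

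I choose the $a_i,b_i$ explicitly as rational functions of $(N,p,q)$; natural candidates make the cross terms vanish, as in Gidas--Spruck. I take $\omega$ to balance the degrees, for instance $\omega=(p+1)/(q+1)$ or a similar normalization.

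The main obstacle is verifying positivity of the resulting coefficient polynomials uniformly on $\mathcal R_N$ for all $N\ge5$, particularly near the tangency point $(P,P)$, where the margin degenerates. My first approach would be to substitute $t=1/N$, map $\mathcal R_N$ into a box via its defining inequalities, for example $q=\sqrt P+\dots$ and $G_N\ge0$, and expand each coefficient polynomial in the Bernstein basis on that box. Positivity of all Bernstein coefficients, checked in exact rational arithmetic, gives positivity on the region. Where the margin is tight I would subdivide, and I would use the factorizations \eqref{eq:gap} and \eqref{eq:boundary-factor} to extract sign-definite factors before running the Bernstein check.
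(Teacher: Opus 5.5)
\textbf{The gap is the construction of $J_u,J_v$, and your ansatz fails near the tangency point.} Your building blocks for $J_u$ contain no $\nabla v$ term. (The block $u^{r+1}v^p\nabla u$ also has the wrong homogeneity; presumably $u^rv^p\nabla u$ is meant.)

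Normalize the coefficient of $u^rD^2u\,\nabla u$ to $1$ and complete the trace-free square as in Lemma~\ref{lem:identity} with $c=0$. If $b_u$ is the resulting coefficient of $F_1=u^rv^{2p}$, the coefficient of $u^rv^{p-1}\nabla u\cdot\nabla v$ is forced to be $-p(c_N+b_u)$. Only $J_u$ produces $F_1$, so $b_u>0$ and this coefficient is $\le -pc_N$. It cannot be made to vanish ``as in Gidas--Spruck''. With your monomials and the balanced weights,
$u^rv^{p-1}|\nabla u||\nabla v|=m_1^{1/2}m_3^{1/2}m_2^{1-\mu}m_4^{\mu}$ with $\mu=\tilde\kappa/(4-2\tilde\kappa)<\frac12$. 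The mixed term of $J_v$ has exponents $\mu,1-\mu$ on $m_2,m_4$.

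Now use the bubble. The point $(P,P)$ lies in $\overline{\mathcal R_N}$. At $(P,P)$ one has $r=s=-2/(N-2)$, so the fields are $O(|x|^{3-2N})$ on the bubble $u=v=U$, and hence $\int_{\R^N}\Div(J_u+\omega J_v)\,dx=0$. Along the bubble, $(m_1,m_2,m_3,m_4)$ is a positive multiple of $(t,1,t,1)$, with $t=|\nabla U|^2/U^{P+1}$ taking every value in $[0,\infty)$.

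Suppose your form were $\ge\varepsilon_0|m|^2$ for $(p,q)\in\mathcal R_N$ arbitrarily close to $(P,P)$. Normalizing the coefficients and passing to the limit gives a form that is $\ge0$ and vanishes on this whole ray. A quadratic in $t$ that vanishes identically forces $A_u=A_v=b_u=b_v=0$. What remains is
$e_um_1m_2+\omega e_vm_3m_4-Pc_N\bigl(m_2^{1-\mu}m_4^{\mu}+\omega m_2^{\mu}m_4^{1-\mu}\bigr)\sqrt{m_1m_3}$, where $e_u,e_v$ are the cross-term coefficients. At the optimal ratio $m_1/m_3$ the cross terms equal $2\sqrt{\omega e_ue_v\,m_2m_4}\,\sqrt{m_1m_3}$. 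Since $1-\mu>\frac12$, the form is negative as $m_2\to\infty$.

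So near the tangency point no choice of $a_i,b_i,\omega$ in your ansatz gives even a nonnegative lower-order form, and the Young / semidefinite / Bernstein plan cannot cover $\mathcal R_N$. As written, the plan also fixes no coefficients, so there is no polynomial to certify.

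\textbf{The missing idea.} The paper adds the Picone term $c_u\,u^{r+1}v^{p-1}\nabla v$ to $J_u$, and symmetrically to $J_v$. Its divergence supplies $(r+1)c_uu^rv^{p-1}\nabla u\cdot\nabla v+(p-1)c_uu^{r+1}v^{p-2}|\nabla v|^2-c_uu^{r+q+1}v^{p-1}$. With $b_u,c_u$ as in \eqref{eq:bc}, the mixed term and the cross term $u^{r-1}v^p|\nabla u|^2$ combine exactly into $c_u(p-1)u^{r+1}v^{p-2}|\nabla v-\theta_u\frac vu\nabla u|^2$. After this only $A_uQ_u$, $A_vQ_v$ and zeroth-order terms remain, so no four-variable quadratic form is ever needed.

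The balanced weights are then used, not to factor the mixed term, but to write $F_1$, $u^{r+q+1}v^{p-1}$ and $u^{q-1}v^{p+s+1}$ as $F_2\rho^{1+\sigma}$, $F_2\rho^\sigma$ and $F_2\rho$, with $\rho=v^{p+1}/u^{q+1}$. Positivity reduces to the one-variable function $\Phi$ of Proposition~\ref{prop:criterion}, which works with $\omega=b_um^\sigma/c_v$ exactly when $b_ub_v>c_uc_v$. This is compatible with the bubble: at $(P,P)$ one has $\theta_u=\theta_v=1$, $b_u=c_u$ and $b_v=c_v$, so the zeroth-order terms cancel one another rather than having to vanish.

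The rest is the explicit choice \eqref{eq:chosen} and Bernstein certificates in $h=\sqrt P-1$, $p=P+hy$, $q=P-hy-h^2z$. These variables make the $N$-dependence polynomial; in $t=1/N$, $\sqrt P$ is not rational. The degeneracy at $(P,P)$ is removed by dividing $\mathcal U-\mathcal V$ by $h^3$. The disk enters only through $\widehat Q=y^2T_*+zR_*$ and $2\widehat Q=-T_*g+zH_*$.

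\textbf{A smaller point.} Your treatment of $s<0$ is backwards. The disk maximum $(2+\sqrt2N)/(N-2)$ exceeds $(N+3)/(N-3)$ for every $N\ge8$. Your ``crude'' bound $p<2P-\sqrt P$ already suffices, because $(N+3)/(N-3)=(5P-1)/(5-P)$ and $(5P-1)-(5-P)(2P-\sqrt P)=4h^2+7h^3+2h^4>0$. The paper simply uses $p\le P+h$, i.e.\ $y\le1$.
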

Here we  introduce the vector fields of the
form \eqref{eq:current-general}
\begin{equation}\label{10-501}
%\begin{aligned}
J_u=u^rD^2u\,\nabla u
+\left(k_u-\frac r2\right)u^{r-1}|\nabla u|^2\nabla u
+\left(\frac1N-b_u\right)u^rv^p\nabla u
+c_u u^{r+1}v^{p-1}\nabla v,
\end{equation}
\begin{equation}\label{10-502}
J_v=v^sD^2v\,\nabla v
+\left(k_v-\frac s2\right)v^{s-1}|\nabla v|^2\nabla v
+\left(\frac1N-b_v\right)v^su^q\nabla v
+c_v v^{s+1}u^{q-1}\nabla u.
%\end{aligned}
\end{equation}

The proof occupies this section. We first derive the component
identity and a criterion for combining the two fields, then
choose the parameters and verify each of the required signs.

\subsection{Divergence identities and their combination}\label{sec:identity-combination}

\subsubsection{The Obata-Picone divergence identity}

We first express the divergence associated with one component as
a sum of square terms, quartic gradient terms and zeroth-order
coupling terms. Interchanging the components gives the second
identity needed below.

For a $N \times N$  symmetric matrix $H$, write
\[
H^\circ=H-\frac{\tr H}{N}I,\qquad
|H^\circ|^2=|H|^2-\frac{(\tr H)^2}{N}.
\]
The elementary Picone identity
\[
|\nabla\phi|^2-
\nabla\left(\frac{\phi^2}{w}\right)\cdot\nabla w
=\left|\nabla\phi-\frac{\phi}{w}\nabla w\right|^2,
\qquad w>0,
\]
with $w=u^\theta$ ($\theta\in\R$) and $\phi=v$, will give the gradient square
in \eqref{eq:identity}.

Fix $r,k,\theta\in\R$. Put $X=\nabla u$, $Y=\nabla v$, and define
the vector field
\begin{equation}\label{eq:current-general}
\begin{aligned}
J={}&u^rD^2u\,X+
\left(k-\frac r2\right)u^{r-1}|X|^2X\\
&+\left(\frac1N-b\right)u^rv^pX
+c\,u^{r+1}v^{p-1}Y.
\end{aligned}
\end{equation}

\begin{remark}The coefficient $k-r/2$ makes the Hessian cross term in $\Div J$
equal to $2ku^{r-1}D^2u[X,X]$.
\end{remark}

\Needspace{6\baselineskip}
\begin{lemma}[Obata-Picone divergence identity]\label{lem:identity}
Set $c_N=(N-1)/N$ and
\[
\lambda=\frac32r-\frac{N+2}{N}k,\qquad
\tau=r+1+2(p-1)\theta,\qquad
\delta=r\tau+p(p-1)\theta^2.
\]
If $\delta\ne0$ and
\begin{equation}\label{eq:bc}
b=\frac{\lambda\tau}{\delta}-c_N,\qquad
c=\frac{\lambda p}{\delta},
\end{equation}
then the vector field \eqref{eq:current-general} satisfies
\begin{equation}\label{eq:identity}
\begin{aligned}
\Div J={}&u^r\left|\left(D^2u+\frac kuX\otimes X\right)^\circ\right|^2
+A(r,k)u^{r-2}|X|^4\\
&+c(p-1)u^{r+1}v^{p-2}
\left|Y-\theta\frac vuX\right|^2\\
&+b\,u^rv^{2p}-c\,u^{r+q+1}v^{p-1},
\end{aligned}
\end{equation}
where
\begin{equation}\label{eq:quartic-coefficient}
A(r,k)=-c_Nk^2+(r-1)k-\frac{r(r-1)}2.
\end{equation}
\end{lemma}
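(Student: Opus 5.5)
The identity \eqref{eq:identity} is pointwise, so I would prove it by direct expansion. I will compute $\Div J$ term by term, use the system to remove $\Delta u$ and $\Delta v$, and then match coefficients monomial by monomial against the right-hand side. Write $H=D^2u$. Throughout, $\tr H=\Delta u=-v^p$ and $\Delta v=-u^q$.

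\textbf{Step 1: expand $\Div J$.} The four pieces of \eqref{eq:current-general} give the following.

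The Bochner-type identity $\Div(HX)=|H|^2+\nabla\Delta u\cdot X=|H|^2-pv^{p-1}X\cdot Y$ yields
\[
\Div(u^rHX)=u^r|H|^2+ru^{r-1}H[X,X]-pu^rv^{p-1}X\cdot Y.
\]

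Next,
\[
\Div\bigl(u^{r-1}|X|^2X\bigr)=(r-1)u^{r-2}|X|^4+2u^{r-1}H[X,X]-u^{r-1}v^p|X|^2.
\]
With the factor $k-r/2$, the Hessian cross terms total $2ku^{r-1}H[X,X]$, as in the Remark.

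The third piece gives
\[
\Div(u^rv^pX)=ru^{r-1}v^p|X|^2+pu^rv^{p-1}X\cdot Y-u^rv^{2p}.
\]

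The fourth piece gives
\[
\Div(u^{r+1}v^{p-1}Y)=(r+1)u^rv^{p-1}X\cdot Y+(p-1)u^{r+1}v^{p-2}|Y|^2-u^{r+q+1}v^{p-1}.
\]

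\textbf{Step 2: expand the right-hand side.} For the traceless square,
\[
\Bigl|\bigl(H+\tfrac kuX\otimes X\bigr)^\circ\Bigr|^2=|H|^2+\tfrac{2k}{u}H[X,X]+c_Nk^2\tfrac{|X|^4}{u^2}+\tfrac{2k}{Nu}v^p|X|^2-\tfrac1Nv^{2p}.
\]
This used $\tr H=-v^p$ and $\tr(X\otimes X)=|X|^2$. Multiplying by $u^r$ produces exactly the $|H|^2$ and $H[X,X]$ terms of Step 1.

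The Picone square, multiplied by $c(p-1)u^{r+1}v^{p-2}$, contributes
\[
c(p-1)\Bigl(u^{r+1}v^{p-2}|Y|^2-2\theta u^rv^{p-1}X\cdot Y+\theta^2u^{r-1}v^p|X|^2\Bigr).
\]

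\textbf{Step 3: match coefficients.} The remaining monomials are $u^{r-2}|X|^4$, $u^{r-1}v^p|X|^2$, $u^rv^{p-1}X\cdot Y$, $u^{r+1}v^{p-2}|Y|^2$, $u^rv^{2p}$ and $u^{r+q+1}v^{p-1}$.
\begin{itemize}
\item The quartic term: on the left it has coefficient $(k-\tfrac r2)(r-1)$. Subtracting the $c_Nk^2$ from the square leaves exactly $A(r,k)$ in \eqref{eq:quartic-coefficient}.
\item The $|Y|^2$ term matches automatically, with coefficient $c(p-1)$ on both sides.
\item The $u^{r+q+1}v^{p-1}$ term matches automatically, with coefficient $-c$ on both sides.
\item The $u^rv^{2p}$ term: the left side has $-(\tfrac1N-b)$ and the right side has $-\tfrac1N+b$, so these agree for every $b$.
\end{itemize}
The two genuine constraints come from $X\cdot Y$ and from $v^p|X|^2$:
\[
-p+(\tfrac1N-b)p+c(r+1)=-2c(p-1)\theta,
\]
\[
-(k-\tfrac r2)+r(\tfrac1N-b)=\tfrac{2k}{N}+c(p-1)\theta^2.
\]
Set $B=b+c_N$, so that $\tfrac1N-b=1-B$. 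Then the system becomes
\[
pB=c\tau,\qquad rB+c(p-1)\theta^2=\tfrac32r-\tfrac{N+2}{N}k=\lambda,
\]
with $\tau$ as in the statement. Substituting $B=c\tau/p$ into the second equation gives $c\,\delta/p=\lambda$. Hence, when $\delta\neq0$, we get $c=\lambda p/\delta$ and $B=\lambda\tau/\delta$, which is exactly \eqref{eq:bc}.

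The only delicate point is this bookkeeping of the two mixed monomials. In particular, one must check that the $\tfrac{2k}{Nu}v^p|X|^2$ term coming from the traceless projection combines with $-(k-r/2)$ to give precisely the constant $\lambda$. Everything else in the identity is forced.
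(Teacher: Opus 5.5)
Your proposal is correct and is essentially the paper's proof. You expand the four divergences, subtract the expansions of the traceless square and the Picone square, and solve the two linear conditions from the $X\cdot Y$ and $u^{r-1}v^p|X|^2$ coefficients; your $B=b+c_N$ is the paper's $c_N+b$, and the determinant of that system is exactly $\delta$. The only thing the paper adds is that $u,v>0$ makes the nonlinearities smooth, so elliptic regularity supplies the third derivatives of $u$ used implicitly in $\Div(u^rD^2u\,X)$; you should state this in one line.
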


\begin{proof}
Because \(u,v>0\), they are bounded away from zero on compact
subsets. The nonlinearities are therefore smooth there, and local
elliptic regularity justifies the following differentiations.
Recall that
\[
\Delta u=-v^p,\qquad \Delta v=-u^q,
\qquad
\nabla\Delta u=-pv^{p-1}Y,
\qquad
\nabla|X|^2=2D^2u\,X.
\]

We first differentiate the four summands in
\eqref{eq:current-general}. The product rule gives
\begin{equation}\label{eq:four-divergences}
\begin{aligned}
\Div(u^rD^2u\,X)
={}&u^r|D^2u|^2
+r u^{r-1}D^2u[X,X]
-p u^rv^{p-1}X\cdot Y,\\
\Div(u^{r-1}|X|^2X)
={}&(r-1)u^{r-2}|X|^4
+2u^{r-1}D^2u[X,X]
-u^{r-1}v^p|X|^2,\\
\Div(u^rv^pX)
={}&r u^{r-1}v^p|X|^2
+p u^rv^{p-1}X\cdot Y
-u^rv^{2p},\\
\Div(u^{r+1}v^{p-1}Y)
={}&(r+1)u^rv^{p-1}X\cdot Y\\
&+(p-1)u^{r+1}v^{p-2}|Y|^2
-u^{r+q+1}v^{p-1}.
\end{aligned}
\end{equation}
The first equality uses
\[
\Div(D^2u\,X)=|D^2u|^2+X\cdot\nabla\Delta u,
\]
and the last equality uses \(\Delta v=-u^q\).

Multiplying the four lines of \eqref{eq:four-divergences} by the
coefficients in \eqref{eq:current-general} and collecting like
terms yields
\begin{equation}\label{eq:raw-divergence}
\begin{aligned}
\Div J={}&u^r|D^2u|^2
+2ku^{r-1}D^2u[X,X]\\
&+\left(k-\frac r2\right)(r-1)u^{r-2}|X|^4\\
&+\left(-k+\frac r2+\frac rN-rb\right)
u^{r-1}v^p|X|^2\\
&+\left[-p(c_N+b)+c(r+1)\right]
u^rv^{p-1}X\cdot Y\\
&+c(p-1)u^{r+1}v^{p-2}|Y|^2\\
&+\left(b-\frac1N\right)u^rv^{2p}
-c\,u^{r+q+1}v^{p-1}.
\end{aligned}
\end{equation}
The coefficient \(2k\) in the second term is the sum
\[
r+2\left(k-\frac r2\right)=2k.
\]

For the trace-free Hessian term, set
\[
\mathcal H=D^2u+\frac kuX\otimes X.
\]
Since
\[
|\mathcal H|^2
=|D^2u|^2+\frac{2k}{u}D^2u[X,X]
+\frac{k^2}{u^2}|X|^4
\]
and
\[
\tr\mathcal H
=-v^p+\frac{k}{u}|X|^2,
\]
the identity
\(
|\mathcal H^\circ|^2
=|\mathcal H|^2-(\tr\mathcal H)^2/N
\)
gives
\begin{equation}\label{eq:trace-expansion}
\begin{aligned}
u^r\left|\left(D^2u+\frac kuX\otimes X\right)^\circ\right|^2
={}&u^r|D^2u|^2+2ku^{r-1}D^2u[X,X]\\
&+c_Nk^2u^{r-2}|X|^4
+\frac{2k}{N}u^{r-1}v^p|X|^2-\frac1Nu^rv^{2p}.
\end{aligned}
\end{equation}

Subtract the right-hand side of \eqref{eq:trace-expansion} from
the expansion \eqref{eq:raw-divergence}, the coefficient of
\(u^{r-2}|X|^4\) becomes
\[
\left(k-\frac r2\right)(r-1)-c_Nk^2
=-c_Nk^2+(r-1)k-\frac{r(r-1)}2
=A(r,k),
\]
then the two \(u^rv^{2p}/N\) terms in $\ref{eq:raw-divergence}$ and $\ref{eq:trace-expansion}$ combine to leave
\(b\,u^rv^{2p}\). 

Hence
\[
\begin{aligned}
\Div J
={}&u^r|\mathcal H^\circ|^2
+A(r,k)u^{r-2}|X|^4\\
&+b\,u^rv^{2p}
-c\,u^{r+q+1}v^{p-1}
+\mathcal Q,
\end{aligned}
\]
where the remaining quadratic expression is
\begin{equation}\label{eq:quadratic-before-square}
\begin{aligned}
 \mathcal Q={}&
[\lambda-r(c_N+b)]u^{r-1}v^p|X|^2\\
&+[-p(c_N+b)+c(r+1)]u^rv^{p-1}X\cdot Y\\
&\quad
+c(p-1)u^{r+1}v^{p-2}|Y|^2.
\end{aligned}
\end{equation}

Expanding the gradient square gives

\begin{equation}\label{eq:10-061}
\begin{aligned}
&c(p-1)u^{r+1}v^{p-2}
\left|Y-\theta\frac vuX\right|^2\\
&\quad=
c(p-1)u^{r+1}v^{p-2}|Y|^2
-2c(p-1)\theta\,u^rv^{p-1}X\cdot Y\\
&\qquad\quad
+c(p-1)\theta^2u^{r-1}v^p|X|^2.
\end{aligned}
\end{equation}

Next, we prove under the assumption of lemma $\ref{lem:identity}$, $\mathcal Q$ is identity to the right hand of $\ref{eq:10-061}$.

The coefficient of \(|Y|^2\)  already agrees with
\eqref{eq:quadratic-before-square}. Equalities of the coefficients of mixed and
\(|X|^2\)  are therefore respectively equivalent to
\begin{equation}\label{eq:matching}
p(c_N+b)=c\tau,\qquad
\lambda-r(c_N+b)=c(p-1)\theta^2.
\end{equation}
The first equation in \eqref{eq:matching} is equivalent to
\[
-p(c_N+b)+c(r+1)=-2c(p-1)\theta.
\]
It gives \(c_N+b=c\tau/p\). Substituting this into the second
equation and multiplying by \(p\), we obtain
\[
p\lambda
=c\bigl[r\tau+p(p-1)\theta^2\bigr]
=c\delta.
\]
Since \(\delta\ne0\),
\[
c=\frac{\lambda p}{\delta},
\qquad
b=\frac{c\tau}{p}-c_N
=\frac{\lambda\tau}{\delta}-c_N.
\]
With these values of $b,c$, the expression $\mathcal Q$ is the
required square. Substituting it into the expression for $\Div J$, we
prove \eqref{eq:identity}.
\end{proof}

In fact, the discussion above is for $J_u$, here $b,c,\delta,\lambda,\tau$ in lemma \ref{lem:identity} is $b_u,c_u,\delta_u,\lambda_u,\tau_u$ respectively. For $J_v$, we replace
$(r,p,u,v)$ by $(s,q,v,u)$ and use the similar approach to get the similar identity and $b_v, c_v,\delta_v,\lambda_v,\tau_v$.

For the two components, use subscripts $u,v$, we write
\[
A_u=A(r,k_u),\quad A_v=A(s,k_v),\qquad
Q_u=u^{r-2}|\nabla u|^4,\quad Q_v=v^{s-2}|\nabla v|^4.
\]

\subsubsection{Combining the two component identities}

We match the scaling degrees of $u^rv^{2p}, v^su^{2q}$ before
combining the two vector fields. The criterion below ensures
that the combined divergence controls $u^rv^{2p}, v^su^{2q}$
and the quartic gradient terms.

Recall that 
\begin{equation}\label{eq:balanced}
\tilde{\kappa}=\frac{N-3}{N},\qquad
r+2=\tilde{\kappa}(q+1),\qquad s+2=\tilde{\kappa}(p+1).
\end{equation}
Set
\begin{equation}\label{eq:energies}
F_1=u^rv^{2p},\qquad F_2=u^{2q}v^s.
\end{equation}
The relations \eqref{eq:balanced} give $F_1$ and $F_2$ the same
homogeneity under the scaling of positive solution $(u,v)$.

Indeed, the scaling degrees of $F_1$ and $F_2$ are
$r\alpha+2p\beta$ and $2q\alpha+s\beta$, respectively.
Using $p\beta=\alpha+2$, $q\alpha=\beta+2$ and
\eqref{eq:balanced}, we have
\[
\begin{aligned}
r\alpha+2p\beta
&=(r+2)\alpha+4
=\tilde{\kappa}(q+1)\alpha+4\\
&=\tilde{\kappa}(p+1)\beta+4
=(s+2)\beta+4
=2q\alpha+s\beta.
\end{aligned}
\]

The following proposition requires only \eqref{eq:balanced} and the 
coefficient inequalities assumptions $\ref{eq:target-signs}$. We verify these inequalities
for $(p,q)\in\mathcal R_N$ in Subsection~\ref{sec:parameter-positivity}.

\begin{proposition}\label{prop:criterion}
Let $p,q>1$, the weights satisfying \eqref{eq:balanced}.
If
\begin{equation}\label{eq:target-signs}
A_u,A_v,b_u,b_v,c_u,c_v>0,\qquad b_ub_v>c_uc_v,
\end{equation}
then there exist constants $\omega,\varepsilon_0>0$ such that
\begin{equation}\label{eq:coercivity}
\Div(J_u+\omega J_v)
\ge\varepsilon_0(Q_u+Q_v+F_1+F_2).
\end{equation}
Here the constants $\omega, \varepsilon_0 $ depend on the coefficients $\eqref{eq:target-signs}$, but not on the solution  $(u,v)$.
\end{proposition}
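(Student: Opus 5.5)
The plan is to add the two component identities of Lemma~\ref{lem:identity}, discard the nonnegative square terms, and reduce what remains to a pointwise inequality in two positive quantities $F_1,F_2$. By Lemma~\ref{lem:identity}, applied to $J_u$ and (with $(r,p,u,v)$ replaced by $(s,q,v,u)$) to $J_v$, for every $\omega>0$ we have
\[
\Div(J_u+\omega J_v)\ge A_uQ_u+\omega A_vQ_v+b_uF_1+\omega b_vF_2
-c_u\,u^{r+q+1}v^{p-1}-\omega c_v\,v^{s+p+1}u^{q-1}.
\]
Here we drop the trace-free Hessian squares and the Picone squares. The Picone squares are nonnegative because $c_u(p-1)>0$ and $c_v(q-1)>0$, using $p,q>1$ and $c_u,c_v>0$. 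The quartic terms already carry the positive coefficients $A_u$ and $\omega A_v$. So the whole task is to show that the zeroth-order part dominates $\varepsilon(F_1+F_2)$ for a suitable $\omega$.

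\textbf{Step 1: the cross terms are geometric means of $F_1,F_2$.} I would check that, under \eqref{eq:balanced},
\[
u^{r+q+1}v^{p-1}=F_1^{\theta}F_2^{1-\theta},\qquad v^{s+p+1}u^{q-1}=F_1^{1-\theta}F_2^{\theta},\qquad \theta=\frac{1-\tilde\kappa}{2-\tilde\kappa}\in(0,\tfrac12).
\]
Matching the exponents of $u$ and of $v$ gives two formulas for $\theta$. These are $\theta=(q-r-1)/(2q-r)$ and $\theta=(p-1-s)/(2p-s)$. Substituting $r=\tilde\kappa(q+1)-2$ and $s=\tilde\kappa(p+1)-2$, we get $q-r-1=(1-\tilde\kappa)(q+1)$ and $2q-r=(2-\tilde\kappa)(q+1)$, and the analogous formulas in $p$. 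Hence both expressions equal $(1-\tilde\kappa)/(2-\tilde\kappa)$, and this is exactly where the balancing \eqref{eq:balanced} is used. The second identity follows in the same way.

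\textbf{Step 2: reduction to one variable.} Write $t=F_2/F_1>0$ and divide by $c_uF_1$. Put $a=b_u/c_u$, $m=b_v/c_v$ and $e=\omega c_v/c_u$, so that $\omega b_v/c_u=em$. It then suffices to find $e>0$ and $\eta>0$ with
\[
a+em\,t-t^{1-\theta}-e\,t^{\theta}\ge \eta(1+t)\qquad\text{for all }t>0 .
\]
The hypothesis $b_ub_v>c_uc_v$ says precisely that $am>1$. I would then set $t=\lambda x$ and choose $e=\lambda^{1-2\theta}$, so that both negative terms acquire the common factor $g=\lambda^{1-\theta}$. The expression becomes
\[
a+g\,(\lambda^{1-\theta}m)\,x-g\,(x^{1-\theta}+x^{\theta}).
\]
Because $am>1$, we can choose $\lambda$ with $1/m<\lambda^{1-\theta}<a$. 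Then $a=g(1+\eta_1)$ and $\lambda^{1-\theta}m=1+\eta_2$ for some $\eta_1,\eta_2>0$.

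The elementary inequality $x^\theta+x^{1-\theta}\le 1+x$ holds because
\[
1+x-x^\theta-x^{1-\theta}=(1-x^\theta)(1-x^{1-\theta})\ge0 .
\]
It follows that the expression is at least $g\min\{\eta_1,\eta_2\}(1+x)$, which is a positive multiple of $1+t$. This fixes $\omega=ec_u/c_v$ and gives a lower bound $\varepsilon_1(F_1+F_2)$. Finally take $\varepsilon_0=\min\{A_u,\omega A_v,\varepsilon_1\}$. All constants depend only on the coefficients in \eqref{eq:target-signs} and on $\tilde\kappa$, and not on $(u,v)$.

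\textbf{Main obstacle.} I expect Step 2 to be the crux. The issue is choosing the single free parameter $\omega$ so that the two cross terms, which have different homogeneities $t^{1-\theta}$ and $t^{\theta}$, are controlled simultaneously by the sharp product condition $b_ub_v>c_uc_v$, rather than by a stronger condition that a naive use of Young's inequality would require. The rescaling $t=\lambda x$, followed by the choice $e=\lambda^{1-2\theta}$ that symmetrizes the negative terms, is what I would try first.
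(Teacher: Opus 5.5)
Your proof is correct. Up to the change of variable $t=F_2/F_1=\rho^{-(1+\sigma)}$, with $\sigma=1-\tilde\kappa$ and hence $\theta=\sigma/(1+\sigma)$, it follows the paper's reduction. You drop the squares, use \eqref{eq:balanced} to write the cross terms as powers of one ratio, and use $b_ub_v>c_uc_v$ to pick a reference scale. Your condition $1/m<\lambda^{1-\theta}<a$ says exactly that $m_*:=\lambda^{-(1-\theta)}$ lies in the paper's interval $(a_-,a_+)=(c_u/b_u,\,b_v/c_v)$.

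The difference is in the choice of $\omega$ and in how the uniform bound is reached. The paper takes $\omega=b_um^\sigma/c_v$, which balances $b_uF_1$ against the $\omega c_v$ cross term at $\rho=m$. Its decomposition $\Phi=(\rho^\sigma-m^\sigma)(\rho-m)+(m-a_-)\rho^\sigma+(a_+-m)m^\sigma$ leaves positive slack only on $\rho^\sigma$ and the constant. The growth $\rho^{1+\sigma}$ is then recovered only asymptotically from the product term, so the paper needs a separate limit-and-continuity argument to get $\inf_{\rho>0}\Phi/(1+\rho^{1+\sigma})>0$.

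In the paper's notation your choice is $\omega=c_u m_*^{\sigma-1}/c_v$. This equalizes the two negative terms at the reference scale, since both become $c_um_*^\sigma$ at $\rho=m_*$. The same-sign factorization $(1-x^\theta)(1-x^{1-\theta})\ge0$ then absorbs both negative terms completely, and the slack lands on the coercive terms $F_1,F_2$ themselves. You get an explicit linear lower bound and an explicit $\varepsilon_0$ with no compactness step, which is a small but genuine simplification.

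One cosmetic point: $1+x=1+t/\lambda$ is not a multiple of $1+t$. It is bounded below by $\min\{1,\lambda^{-1}\}(1+t)$, and that factor should appear in $\varepsilon_1$.
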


\begin{proof}
For any $\omega>0$, Lemma~\ref{lem:identity} gives
\[
\begin{aligned}
\Div(J_u+\omega J_v)
&=\Div J_u+\omega\Div J_v\\
&\ge A_uQ_u+\omega A_vQ_v
     +b_uF_1+\omega b_vF_2\\
&\quad-c_u u^{r+q+1}v^{p-1}
      -\omega c_v u^{q-1}v^{p+s+1}.
\end{aligned}
\]
Here we have discarded the nonnegative square terms,
using $p,q>1$ and $c_u,c_v>0$.

Put
\[
\rho=\frac{v^{p+1}}{u^{q+1}}>0,
\qquad \sigma=1-\tilde{\kappa}>0.
\]
By \eqref{eq:balanced}, we have
\[
\begin{aligned}
\frac{F_1}{F_2}
&=u^{r-2q}v^{2p-s}
=\rho^{1+\sigma},\\
\frac{u^{r+q+1}v^{p-1}}{F_2}
&=u^{r-q+1}v^{p-1-s}
=\rho^\sigma,\\
\frac{u^{q-1}v^{p+s+1}}{F_2}
&=u^{-q-1}v^{p+1}
=\rho.
\end{aligned}
\]
Consequently,
\begin{equation}\label{eq:10062}
\begin{aligned}
\Div(J_u+\omega J_v)
&\ge A_uQ_u+\omega A_vQ_v\\
&\quad+F_2\bigl[
b_u\rho^{1+\sigma}-c_u\rho^\sigma
+\omega b_v-\omega c_v\rho
\bigr].
\end{aligned}
\end{equation}

Since $b_ub_v>c_uc_v$, we have
\[
a_-:=\frac{c_u}{b_u}
<
a_+:=\frac{b_v}{c_v}.
\]
Choose
\[
m=\frac{a_-+a_+}{2},
\qquad
\omega=\frac{b_um^\sigma}{c_v}>0.
\]
Then
\[
\begin{aligned}
&b_u\rho^{1+\sigma}-c_u\rho^\sigma
+\omega b_v-\omega c_v\rho\\
&\qquad
=b_u\bigl[\rho^\sigma(\rho-a_-)
          +m^\sigma(a_+-\rho)\bigr]
=:b_u\Phi(\rho).
\end{aligned}
\]
We rewrite $\Phi$ as
\begin{equation}\label{eq:Phi}
\begin{aligned}
\Phi(\rho)
&=\rho^\sigma(\rho-a_-)+m^\sigma(a_+-\rho)\\
&=(\rho^\sigma-m^\sigma)(\rho-m)
 +(m-a_-)\rho^\sigma+(a_+-m)m^\sigma.
\end{aligned}
\end{equation}
The first term is nonnegative because $\sigma>0$,
and the other two are positive because $a_-<m<a_+$.
Thus $\Phi(\rho)>0$ for every $\rho>0$.

Moreover,
\[
\lim_{\rho\downarrow0}
\frac{\Phi(\rho)}{1+\rho^{1+\sigma}}
=m^\sigma a_+>0,
\qquad
\lim_{\rho\to\infty}
\frac{\Phi(\rho)}{1+\rho^{1+\sigma}}=1.
\]
Positivity and continuity therefore give
\[
c_0:=\inf_{\rho>0}
\frac{\Phi(\rho)}{1+\rho^{1+\sigma}}>0.
\]
Set
\[
\varepsilon_0
=\min\{A_u,\omega A_v,b_uc_0\}>0.
\]
Returning to the divergence estimate $\eqref{eq:10062}$, we obtain
\[
\begin{aligned}
\Div(J_u+\omega J_v)
&\ge A_uQ_u+\omega A_vQ_v+b_uF_2\Phi(\rho)\\
&\ge A_uQ_u+\omega A_vQ_v
     +b_uc_0F_2(1+\rho^{1+\sigma})\\
&=A_uQ_u+\omega A_vQ_v+b_uc_0(F_1+F_2)\\
&\ge\varepsilon_0(Q_u+Q_v+F_1+F_2).
\end{aligned}
\]
The constants $\omega$, $\varepsilon_0$ are independent of the solution.
\end{proof}

\subsection{Parameter choice and coefficient positivity}\label{sec:parameter-positivity}

In this subsection, we assume $(p,q)\in\mathcal R_N$.

\subsubsection{Choice of parameters}\label{sec:parameters}

We choose the parameters and prove $-2<r,s<0$ and
$A_u,A_v>0$ and the numerators of $b_u,b_v,c_u,c_v$  in \eqref{eq:target-signs}
are positive. The next subsection proves
$\delta_u,\delta_v>0$ and $b_ub_v>c_uc_v$.

Fix $\tilde{\kappa},r,s$ as in \eqref{eq:chosen-weights}, these weights
satisfy \eqref{eq:balanced}. Write
\[
x_u=\tilde{\kappa}(q-P)<0,\qquad x_v=\tilde{\kappa}(p-P)>0.
\]
Thus
\[
r=-\frac2{N-2}+x_u,\qquad
s=-\frac2{N-2}+x_v.
\]
For $i=u,v$, choose
\begin{equation}\label{eq:chosen}
\begin{aligned}
k_i&=-\frac N{N-2}+\frac{x_i}{2}+\frac{x_i^2}{4},&
\mu_i&=\frac N{N-2}+x_i-\frac{N+2}{4(N-1)}x_i^2,\\
\theta_u&=\frac{\mu_u-r}{p},&
\theta_v&=\frac{\mu_v-s}{q}.
\end{aligned}
\end{equation}
Indeed, if $r=-2/(N-2)+x$ and
$k=-N/(N-2)+a_1x+a_2x^2$ are substituted in $A(r,k)$,
the constant term vanishes and the linear term is
$(a_1-1/2)x$. Hence $a_1=1/2$ removes the term that changes sign
between the two components. Taking $a_2=1/4$ gives
\begin{equation}\label{eq:chosen-identities}
A_i=\frac{x_i^2[4+4x_i-(N-1)x_i^2]}{16N},
\qquad i=u,v.
\end{equation}

Notice $c_N=(N-1)/N$, the choices \eqref{eq:chosen} satisfy
\[
\lambda_u=c_N\mu_u,\qquad \lambda_v=c_N\mu_v,
\]
where $\lambda_u,\lambda_v$ are the coefficients from
Lemma~\ref{lem:identity}. Substituting these relations and
$\theta_u=(\mu_u-r)/p$, $\theta_v=(\mu_v-s)/q$ gives the following
coefficient expressions whenever the denominators are nonzero:
\begin{equation}\label{eq:coefficient-definitions}
\begin{aligned}
\delta_u&=r+\frac{r^2+(p-1)\mu_u^2}{p},&
\delta_v&=s+\frac{s^2+(q-1)\mu_v^2}{q},\\
c_u&=\frac{c_Np\mu_u}{\delta_u},&
c_v&=\frac{c_Nq\mu_v}{\delta_v},\\
b_u&=\frac{c_N(\mu_u-r)[p+r+(p-1)\mu_u]}{p\delta_u},&
b_v&=\frac{c_N(\mu_v-s)[q+s+(q-1)\mu_v]}{q\delta_v}.
\end{aligned}
\end{equation}

All denominators will be proved positive in
Section~\ref{sec:coefficient-positivity}.

\proofstep{The normalized variables.}

Use the variables
\[
h=\sqrt P-1,\qquad
p=P+hy,\qquad q=P-hy-h^2z.
\]
Then
\begin{equation}\label{eq:h-map}
P=(1+h)^2,\qquad
N=\frac{2(P+1)}{h(h+2)},\qquad
\tilde{\kappa}=\frac{5-P}{2(P+1)}.
\end{equation}
For $(p,q)  \in$ $\mathcal R_N$, we have
\begin{equation}\label{eq:box}
0<h<\frac8{15},\qquad 0<y,z\le1.
\end{equation}
Indeed, the upper circular boundary is decreasing for
$q\ge\sqrt P$ and has $p=P+h$ at $q=\sqrt P$, so $0<y\le1$.
Moreover, $p+q<2P$ gives $z>0$, while
$q(p-P+1)\ge P$ becomes
\[
y(h+2-y)-z(1+hy)\ge0.
\]
Since
\[
1+hy-y(h+2-y)=(1-y)^2\ge0,
\]
we obtain
\[
z\le\frac{y(h+2-y)}{1+hy}\le1.
\]
The disk condition is
\begin{equation}\label{eq:g}
G_N=h^2g,\qquad
g=(P+1)z-2y^2-2hyz-h^2z^2\ge0.
\end{equation}
Finally,
\begin{equation}\label{eq:transformed-basic-quantities}
\begin{gathered}
x_u=-\tilde{\kappa} h(y+hz),\qquad x_v=\tilde{\kappa} hy,\\
r=h\bar r,\qquad s=h\bar s,\\
\bar r=-\frac{h+2}{2}-\tilde{\kappa}(y+hz),\qquad
\bar s=-\frac{h+2}{2}+\tilde{\kappa} y,\\
p-1=h(2+h+y),\qquad
q-1=h(2+h-y-hz).
\end{gathered}
\end{equation}

\proofstep{The weights.}

From \eqref{eq:chosen-weights}, $r,s>-2$. Formula
\eqref{eq:transformed-basic-quantities} gives $r<0$ and
\[
s\le-\frac{h(h+2)}2+\tilde{\kappa} h<0,
\]
because $0<\tilde{\kappa}<1$. Hence $-2<r,s<0$.

\proofstep{The quartic coefficients.}

From the definition \eqref{eq:chosen-identities}, multiplication by
the positive factor $16N/x_i^2$ gives
\begin{equation}\label{eq:A-normalization}
\frac{16N}{x_i^2}A_i=4+4x_i-(N-1)x_i^2.
\end{equation}
Set $a=\tilde{\kappa} h(1+h)$. The parameter range gives
\[
-a\le x_u<0<x_v\le\tilde{\kappa} h\le a.
\]
Furthermore,
\begin{equation}\label{eq:direct-half-bound}
\begin{aligned}
2(P+1)\left(\frac12-a\right)
&=2-2h-h^2+3h^3+h^4\\
&\ge2-\frac{16}{15}-\frac{64}{225}
=\frac{146}{225}>0,
\end{aligned}
\end{equation}
so $a<1/2$. Since $h\le2/(N-2)$,
\begin{equation}\label{eq:direct-N-bound}
(N-1)\tilde{\kappa} h
\le\frac{2(N-1)(N-3)}{N(N-2)}<2.
\end{equation}
Since $a<1/2$, \eqref{eq:direct-N-bound} gives
\[
4a<2,\qquad
(N-1)a^2=[(N-1)\tilde{\kappa} h]a(1+h)<1+h.
\]
Consequently,
\[
4+4x_i-(N-1)x_i^2
\ge4-4a-(N-1)a^2>1-h>0.
\]
Equation~\eqref{eq:A-normalization} proves $A_u,A_v>0$.

\proofstep{The numerator factors.}

For the numerator factors in \eqref{eq:coefficient-definitions},
the bounds
$(N+2)/(4(N-1))<1/2$ and $|x_i|\le a<1/2$ give
\begin{equation}\label{eq:direct-mu-bound}
\mu_i\ge\frac N{N-2}-a-\frac{N+2}{4(N-1)}a^2
>1-\frac12-\frac18=\frac38.
\end{equation}
Hence $\mu_u-r,\mu_v-s>0$. Also,
\begin{equation}\label{eq:direct-numerator-bounds}
p+r>\frac N{N-2}-a>\frac12,\qquad
q+s>\sqrt P-\frac2{N-2}=1-\frac{h^2}{2}>0.
\end{equation}

Hence the numerators of $b_u,b_v,c_u,c_v$ in \eqref{eq:coefficient-definitions} are positive.

\subsubsection{Positivity of the remaining coefficients}
\label{sec:coefficient-positivity}\label{sec:certificate-definitions}

To prove $\delta_u,\delta_v>0$ and $b_ub_v>c_uc_v$, we reduce
the problem to four polynomial inequalities.
The disk condition is used to prove the product inequality.

\proofstep{The denominator polynomials.}

Put
$L=4(P+1)^2(P+3)>0$ and define the two normalized denominators
\begin{equation}\label{eq:delta-hat}
\widehat\delta_u:=\frac{pL^2}{h}\delta_u,
\qquad
\widehat\delta_v:=\frac{qL^2}{h}\delta_v.
\end{equation}
Since $p,q,L,h>0$, the two multipliers are positive. Thus it
suffices to prove $\widehat\delta_u,\widehat\delta_v>0$.
For the first denominator, substitute its definition in
\eqref{eq:coefficient-definitions}, then use
$r=h\bar r$ and $p-1=h(2+h+y)$ from
\eqref{eq:transformed-basic-quantities}:
\[
\begin{aligned}
\widehat\delta_u
&=\frac{L^2}{h}\bigl[pr+r^2+(p-1)\mu_u^2\bigr]\\
&=\frac{L^2}{h}\bigl[ph\bar r+h^2\bar r^2
       +h(2+h+y)\mu_u^2\bigr]\\
&=L^2\bigl[\bar r(p+h\bar r)+(2+h+y)\mu_u^2\bigr].
\end{aligned}
\]
Similarly, $s=h\bar s$ and $q-1=h(2+h-y-hz)$ give
\[
\begin{aligned}
\widehat\delta_v
&=\frac{L^2}{h}\bigl[qs+s^2+(q-1)\mu_v^2\bigr]\\
&=\frac{L^2}{h}\bigl[qh\bar s+h^2\bar s^2
       +h(2+h-y-hz)\mu_v^2\bigr]\\
&=L^2\bigl[\bar s(q+h\bar s)+(2+h-y-hz)\mu_v^2\bigr].
\end{aligned}
\]

%The factors $P+1$, $P+3$, and $5-P$ occur repeatedly.
We write 
\[
B=P+1,\qquad C=P+3,\qquad K=5-P,
\]
thus $L=4B^2C$ and $\tilde{\kappa}=K/(2B)$. 

Define two polynomials in
$h$ and $w$:
\begin{equation}\label{eq:polynomial-building-blocks}
\begin{aligned}
E(w)&=2BC\bigl[Kw-B(h+2)\bigr],\\
M(w)&=2B^3C+2BCKhw-PK^2h^2w^2.
\end{aligned}
\end{equation}
Here $P,B,C,K$ are themselves polynomials in $h$. Set
\[
E_u=E(-y-hz),\quad E_v=E(y),\qquad
M_u=M(-y-hz),\quad M_v=M(y).
\]
These four expressions are therefore polynomials in $h,y,z$.

Take $w=-y-hz$ for the $u$-component and $w=y$ for the $v$-component,
by \eqref{eq:transformed-basic-quantities} and \eqref{eq:chosen},
 $\bar r,\mu_u$ in the first $u$ case and $\bar s,\mu_v$ in the second $v$
are the two expressions
\[
-\frac{h+2}{2}+\frac{Kw}{2B},
\qquad
\frac B2+\frac{Khw}{2B}-\frac{PK^2h^2w^2}{4B^2C}.
\]

Multiplying them by $L=4B^2C$ cancels the denominators and
gives $E(w),M(w)$. Hence
\begin{equation}\label{eq:polynomial-component-relations}
E_u=L\bar r,\qquad E_v=L\bar s,\qquad
M_u=L\mu_u,\qquad M_v=L\mu_v.
\end{equation}

Substitution into the two formulas above gives
\begin{equation}\label{eq:delta-hat-polynomial}
\begin{aligned}
\widehat\delta_u
&=E_u(pL+hE_u)+(2+h+y)M_u^2,\\
\widehat\delta_v
&=E_v(qL+hE_v)+(2+h-y-hz)M_v^2.
\end{aligned}
\end{equation}

These expressions contain no division by $h$, so they also
define the normalized denominators at $h=0$. All four polynomial
factors $E_u,E_v,M_u,M_v$ have been fixed by the parameter choices.

\proofstep{The product polynomial.}

Using the same polynomial factors, define
\begin{equation}\label{eq:product-polynomial-parts}
\begin{aligned}
\mathcal U={}&(M_u-hE_u)(M_v-hE_v)
             [pL+hE_u+(p-1)M_u]\\
            &\hspace{22mm}\times[qL+hE_v+(q-1)M_v],\\
\mathcal V={}&p^2q^2L^2M_uM_v.
\end{aligned}
\end{equation}
Once $\delta_u,\delta_v>0$ has been established,
\eqref{eq:Q-hat-expansion}--\eqref{eq:Q-hat}
show that $b_ub_v-c_uc_v$ has the same sign as
$\mathcal{U}-\mathcal{V}$.
It therefore suffices to prove $\mathcal{U}-\mathcal{V}>0$.

To verify that $h^3$ divides this difference, expand the two
products in powers of $h$. Their first four coefficients, after
division by $64^4=2^{24}$, are recorded in
Table~\ref{tab:low-order-products}. They follow by multiplying the
explicit factors in \eqref{eq:polynomial-building-blocks} and
\eqref{eq:product-polynomial-parts}, retaining powers of $h$ through
degree three.

\begin{table}[htbp]
\centering\small
\caption{The coefficient of $h^j$ in the two normalized products.}
\label{tab:low-order-products}
\begin{tabular}{@{}ccc@{}}
\toprule
$j$ & $\mathcal U/2^{24}$ & $\mathcal V/2^{24}$\\
\midrule
$0$ & $1$ & $1$\\
$1$ & $20$ & $20$\\[1mm]
$2$ & $\dfrac{393-7y^2-6z}{2}$
    & $\dfrac{393-7y^2-6z}{2}$\\[3mm]
$3$ & $\dfrac{5056-221y^2-14yz-210z}{4}$
    & $\dfrac{5056-223y^2-14yz-214z}{4}$\\[2mm]
\bottomrule
\end{tabular}
\end{table}

The constant, linear, and quadratic coefficients are identical.
They cancel in $\mathcal U-\mathcal V$, so every remaining monomial
contains at least $h^3$. We may therefore define the polynomial
\begin{equation}\label{eq:Q-divisibility}
\widehat Q=\frac{\mathcal U-\mathcal V}{h^3}.
\end{equation}
Division here simply lowers the exponent of $h$ in each remaining
monomial by three. The last row of the table also gives
\[
\widehat Q(0,y,z)=2^{23}(y^2+2z).
\]

\proofstep{Using the disk condition.}

We next construct the factors needed to use $g\ge0$ in \eqref{eq:g}.
At $z=0$, replacing $y$ by $-y$ interchanges $p,q$, $E_u,E_v$,
and $M_u,M_v$. Both products in
\eqref{eq:product-polynomial-parts} are unchanged, so
$\widehat Q(h,y,0)$ contains only even powers of $y$.
Moreover, at $y=z=0$,
\[
p=q=P,\qquad M_u=M_v=\frac{BL}{2},\qquad
hE_u=hE_v=-\frac{(P-1)L}{2}.
\]
Consequently,
\[
\mathcal U=(PL)^2\left(\frac{PBL}{2}\right)^2
=P^4L^2\left(\frac{BL}{2}\right)^2=\mathcal V,
\]
and hence $\widehat Q(h,0,0)=0$. The even polynomial
$\widehat Q(h,y,0)$ therefore has no constant term in $y$,
which proves that $y^2$ divides it.

Write the finite expansion of the polynomial already fixed by
\eqref{eq:Q-divisibility}:
\[
\widehat Q(h,y,z)=\sum_{i,j,k\ge0}a_{ijk}h^iy^jz^k.
\]
Its coefficients $a_{ijk}$ are determined by
\eqref{eq:polynomial-building-blocks}--\eqref{eq:product-polynomial-parts}.
The preceding argument gives $a_{i00}=a_{i10}=0$ for every $i$.
Define
\[
T_*=\sum_{\substack{i\ge0\\j\ge2}}a_{ij0}h^iy^{j-2},
\qquad
R_*=\sum_{\substack{i,j\ge0\\k\ge1}}a_{ijk}h^iy^jz^{k-1}.
\]
All exponents are nonnegative, so these are polynomials. Grouping
the terms with $k=0$ and $k\ge1$ gives
\begin{equation}\label{eq:Q-first}
\widehat Q=y^2T_*+zR_*.
\end{equation}
In particular, $T_*=\widehat Q(h,y,0)/y^2$ and
$R_*=[\widehat Q(h,y,z)-\widehat Q(h,y,0)]/z$ are exact polynomial
quotients, with their values on $y=0$ or $z=0$ fixed by the finite
sums above.

Finally, the disk identity \eqref{eq:g} reads
\[
2y^2=(B-2hy-h^2z)z-g.
\]
Substituting this into twice \eqref{eq:Q-first} gives
\begin{equation}\label{eq:Q-second}
2\widehat Q=-T_*g+zH_*,
\qquad H_*=2R_*+(B-2hy-h^2z)T_*.
\end{equation}
This also defines $H_*$ as a polynomial.

If $R_*,H_*>0$, then $\widehat Q>0$ for $h,y,z>0$ and $g\ge0$. Indeed
 we can use \eqref{eq:Q-first} when $T_*\ge0$ and \eqref{eq:Q-second}
when $T_*<0$. It therefore remains to check the signs of
$\widehat\delta_u,\widehat\delta_v,R_*,H_*$.

\proofstep{The Bernstein positivity criterion.}

Set $\mathcal B=[0,8/15]\times[0,1]^2$. The four polynomials above
have fixed rational coefficients in $h,y,z$. The dimension enters
through $h=\sqrt{(N+2)/(N-2)}-1$, and \eqref{eq:box} places every
parameter triple arising from $\mathcal R_N$ in $\mathcal B$.
Proving the four inequalities on this box therefore covers
all $N\ge5$. We use the following elementary criterion.
For $d\ge0$, set
\[
B_{i,d}(t)=\binom di t^i(1-t)^{d-i},\qquad i=0,\ldots,d.
\]

\begin{lemma}[Bernstein positivity]\label{lem:bernstein}
Let $f(h,y,z)$ have degree at most $d_1,d_2,d_3$ in the respective
variables. It has a unique representation
\begin{equation}\label{eq:bernstein-basis}
f\left(\frac8{15}\xi,y,z\right)
=\sum_{I=0}^{d_1}\sum_{J=0}^{d_2}\sum_{K=0}^{d_3}
\beta_{IJK}B_{I,d_1}(\xi)B_{J,d_2}(y)B_{K,d_3}(z),
\end{equation}

and satisfies
\begin{equation}\label{eq:bernstein-bound}
f(h,y,z)\ge\min_{I,J,K}\beta_{IJK}
\qquad\text{for all }(h,y,z)\in\mathcal B.
\end{equation}

In particular, positive Bernstein coefficients give a uniform
positive lower bound for $f$ on $\mathcal B$.
\end{lemma}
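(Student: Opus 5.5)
The proof has two parts. Existence and uniqueness of the representation \eqref{eq:bernstein-basis} is linear algebra. The bound \eqref{eq:bernstein-bound} follows from the fact that the tensor Bernstein basis is a nonnegative partition of unity on the unit cube.

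First I rescale the $h$-variable. Set $\tilde f(\xi,y,z)=f(\tfrac8{15}\xi,y,z)$. This is a polynomial whose degrees in $\xi,y,z$ are still at most $d_1,d_2,d_3$. The map $(\xi,y,z)\mapsto(\tfrac8{15}\xi,y,z)$ is a bijection from $[0,1]^3$ onto $\mathcal B$. So it suffices to prove the representation and the lower bound for $\tilde f$ on $[0,1]^3$.

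Next, existence and uniqueness. For each $d\ge0$, the polynomials $B_{0,d},\dots,B_{d,d}$ form a basis of the space of univariate polynomials of degree at most $d$. There are $d+1$ of them in a space of dimension $d+1$, so it suffices to show they span. For this I use
\[
t^m=\sum_{i=m}^{d}\frac{\binom im}{\binom dm}B_{i,d}(t),
\]
which comes from writing $t^m=t^m\bigl(t+(1-t)\bigr)^{d-m}$ and expanding with the binomial theorem. The space of polynomials in $(\xi,y,z)$ with the given partial degree bounds is the tensor product of the three univariate spaces. Hence the products $B_{I,d_1}(\xi)B_{J,d_2}(y)B_{K,d_3}(z)$ form a basis of it, and the coefficients $\beta_{IJK}$ exist and are unique. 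When $f$ has rational coefficients, so do the $\beta_{IJK}$, because the change of basis is rational.

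Finally, the bound. For $t\in[0,1]$, every $B_{i,d}(t)$ is nonnegative, and the binomial theorem gives $\sum_iB_{i,d}(t)=(t+1-t)^d=1$. Therefore the triple products are nonnegative on $[0,1]^3$ and sum to $1$ there. So $\tilde f(\xi,y,z)$ is a convex combination of the numbers $\beta_{IJK}$, and hence it is at least $\min\beta_{IJK}$. Undoing the change of variables gives \eqref{eq:bernstein-bound} on all of $\mathcal B$. In particular, if all $\beta_{IJK}>0$, then $f\ge\min\beta_{IJK}>0$ uniformly on $\mathcal B$.

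The only step that needs any care is the spanning identity behind uniqueness, and that is classical. The real work in the paper is computing the $\beta_{IJK}$ for $\widehat\delta_u$, $\widehat\delta_v$, $R_*$ and $H_*$, which is separate from this lemma.
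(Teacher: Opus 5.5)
Your proof is correct and follows the paper's argument: you establish the univariate Bernstein basis by a dimension count, take tensor products after the rescaling $h=\tfrac8{15}\xi$, and conclude from nonnegativity and the partition of unity on the unit cube. The only minor difference is that the paper gets the basis from linear independence (comparing lowest-order powers $t^0,\dots,t^d$), whereas you get it from spanning via the identity $t^m=\sum_{i\ge m}\binom im\binom dm^{-1}B_{i,d}(t)$, which the paper itself uses later for \eqref{eq:bernstein-coefficients}.
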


\begin{proof}
The lowest nonzero powers of $B_{0,d},\ldots,B_{d,d}$ are
$t^0,\ldots,t^d$, respectively. Comparing these powers proves
linear independence, so these polynomials form a basis. Taking
products of the one-variable bases and setting $h=(8/15)\xi$
gives \eqref{eq:bernstein-basis}. For $0\le t\le1$, the basis
functions are nonnegative and
\[
\sum_{i=0}^d B_{i,d}(t)=(t+(1-t))^d=1.
\]
Thus \eqref{eq:bernstein-basis} is a convex combination of its
coefficients on the unit cube, which proves
\eqref{eq:bernstein-bound}.
\end{proof}

\proofstep{Computing the exact Bernstein coefficients.}

Fix one of the four polynomials
$\widehat\delta_u,\widehat\delta_v,R_*,H_*$ from
this subsection, and write its ordinary
power expansion as
\[
f(h,y,z)=\sum_{i,j,k}c_{ijk}h^iy^jz^k.
\]
The numbers $c_{ijk}$ are the coefficients of the formulas in
this subsection, with all parameters
already fixed in Section~\ref{sec:parameters}.

Each polynomial belongs to $\mathbb Q[h,y,z]$ and is constructed
by finitely many additions, multiplications, substitutions, and
the exact divisions by $h^3$, $y^2$, or $z$ justified above.
Performing these operations in rational arithmetic determines
every $c_{ijk}$ exactly.

For fixed coordinate degrees $(d_1,d_2,d_3)$, the Bernstein
coefficients are then given by
\begin{equation}\label{eq:bernstein-coefficients}
\beta_{IJK}=
\sum_{\substack{i\le I,\ j\le J,\ k\le K}}
c_{ijk}\left(\frac8{15}\right)^i
\frac{\binom Ii}{\binom{d_1}i}
\frac{\binom Jj}{\binom{d_2}j}
\frac{\binom Kk}{\binom{d_3}k}.
\end{equation}
To derive this formula, apply the binomial identity
\[
t^j=\sum_{i=j}^d
\frac{\binom ij}{\binom dj}B_{i,d}(t)
\]
to every monomial in each variable. The factor $(8/15)^i$
comes from the change $h=(8/15)\xi$. Every sum in
\eqref{eq:bernstein-coefficients} is finite and every factor is
rational. Thus all Bernstein coefficients can be computed exactly.
The complete table has
$(d_1+1)(d_2+1)(d_3+1)$ entries.

\begin{lemma}\label{lem:polynomial-bounds}
The four polynomials defined above satisfy
\[
\widehat\delta_u,\widehat\delta_v\ge4096,\qquad
R_*\ge2^{24},\qquad H_*\ge3\cdot2^{24}
\quad\text{on }\mathcal B.
\]
\end{lemma}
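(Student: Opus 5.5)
The bounds will come from Lemma~\ref{lem:bernstein}. For each of the four polynomials $f\in\{\widehat\delta_u,\widehat\delta_v,R_*,H_*\}$ we compute the exact Bernstein coefficients $\beta_{IJK}$ on $\mathcal B$ and check that their minimum is at least the stated constant. The first step is to fix the coordinate degrees $(d_1,d_2,d_3)$ of each polynomial. For $\widehat\delta_u,\widehat\delta_v$ we read them off from \eqref{eq:delta-hat-polynomial} and \eqref{eq:polynomial-building-blocks}, using that $P,B,C,K,L$ are polynomials in $h$ of degrees $2,2,2,2,6$. For $R_*,H_*$ we read them off from \eqref{eq:product-polynomial-parts}, after the exact divisions by $h^3$, $y^2$ and $z$ in \eqref{eq:Q-divisibility}--\eqref{eq:Q-second}. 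We may raise $d_1$ (degree elevation) if the coefficients at the natural degree are not all positive. This does not affect the validity of \eqref{eq:bernstein-bound}, since any $d_i$ at least the true degree is admissible.

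Next, we expand each polynomial in the monomial basis over $\mathbb Q$. We substitute $P=(1+h)^2$, $p=P+hy$, $q=P-hy-h^2z$, build $E_u,E_v,M_u,M_v$, form $\widehat\delta_u,\widehat\delta_v$, $\mathcal U$ and $\mathcal V$, and divide $\mathcal U-\mathcal V$ by $h^3$. Before dividing we check that the coefficients of $h^0,h^1,h^2$ vanish, consistently with Table~\ref{tab:low-order-products}. We then split $\widehat Q$ into $y^2T_*+zR_*$ by collecting monomials, and form $H_*=2R_*+(B-2hy-h^2z)T_*$. All of this is exact rational arithmetic, done by computer algebra. Two cheap consistency checks are available: $\widehat Q(0,y,z)=2^{23}(y^2+2z)$ gives $R_*(0,y,z)=2^{24}$, which matches the claimed bound, and the vanishing of $a_{i00},a_{i10}$ confirms the division by $y^2$.

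We then apply \eqref{eq:bernstein-coefficients} to every coefficient table and take the minimum of the $\beta_{IJK}$. These data are to be recorded in Appendix~\ref{app:data}. The main obstacle is that the bounds for $R_*$ and $H_*$ are sharp at $h=0$. There $R_*\equiv2^{24}$, and $H_*(0,y,z)=2^{25}+B\,T_*(0,y,0)=2^{25}+2\cdot 2^{23}=3\cdot2^{24}$, using $B=2$ at $h=0$ and $T_*(0,y,0)=2^{23}$. So the minimal Bernstein coefficients must be attained exactly at $I=0$, and every coefficient with $I\ge1$ must be no smaller. If some $\beta_{IJK}$ with $I\ge1$ falls below the target at the natural degree, the first remedy is degree elevation in $\xi$, and if that fails, subdividing $[0,8/15]$ into subintervals and applying the lemma on each piece. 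For $\widehat\delta_u,\widehat\delta_v$ there is ample margin: at $h=0$, $L=64$, $E=-128$ and $M=128$, so the value is of order $2^{14}$, well above $4096$. One therefore expects the natural degrees to suffice there.
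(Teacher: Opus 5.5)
Your proposal follows the paper's own proof: build $\widehat\delta_u,\widehat\delta_v,R_*,H_*$ in exact rational arithmetic, compute all Bernstein coefficients on $\mathcal B$ via \eqref{eq:bernstein-coefficients}, and let Lemma~\ref{lem:bernstein} turn the minima recorded in Table~\ref{tab:signs} into the stated bounds. Your observation that $R_*\equiv2^{24}$ and $H_*\equiv3\cdot2^{24}$ on $h=0$, which forces the minimum to occur at $I=0$, is correct.

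One side remark is wrong. At $h=0$ you have $p=q=1$, $L=64$, $M_u=M_v=64$ (not $128$), $E_u=-64(1+y)$ and $E_v=64(y-1)$. Hence $\widehat\delta_u=64^2[(2+y)-(1+y)]$ and $\widehat\delta_v=64^2[(2-y)+(y-1)]$ are both identically $4096$ there. So the bound $4096$ is just as sharp as the other two, which is why the table's minimum is exactly $4096$. There is no ``ample margin'': all coefficients with $I\ge1$ must again be at least the common $I=0$ value. This does not affect the validity of your method, only your expectation about where the difficulty lies.
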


\begin{proof}
The program \texttt{verify.py} constructs the four polynomials
from the formulas above and computes all their Bernstein
coefficients by \eqref{eq:bernstein-coefficients}, using exact
rational arithmetic. After substituting $h=(8/15)\xi$, it applies
the one-variable change of basis successively in $\xi,y,z$.
For each polynomial, it reports the coordinate degrees, the
number of coefficients and the smallest coefficient. These
results are recorded in Appendix~\ref{app:data}.
Lemma~\ref{lem:bernstein} turns these coefficient minima into the
stated lower bounds on $\mathcal B$.
\end{proof}

For the parameters under consideration, $h,y,z>0$ and $g\ge0$.
If $T_*\ge0$, \eqref{eq:Q-first} gives $\widehat Q>0$ since
$zR_*>0$. If $T_*<0$, \eqref{eq:Q-second} gives
$2\widehat Q\ge zH_*>0$. Thus the normalized denominators
and the product polynomial have the required signs.

\proofstep{Returning to the coefficients.}

Since $p,q,L,h>0$, \eqref{eq:delta-hat} gives
$\delta_u,\delta_v>0$. The positive numerator factors in
\eqref{eq:direct-mu-bound}--\eqref{eq:direct-numerator-bounds}
and the formulas \eqref{eq:coefficient-definitions} therefore give
$b_u,b_v,c_u,c_v>0$.

To verify the product inequality, first multiply the  $b_u,b_v$
coefficients:
\[
\begin{aligned}
b_ub_v={}&\frac{c_N^2}{pq\delta_u\delta_v}
(\mu_u-r)(\mu_v-s)[p+r+(p-1)\mu_u]\\
&\hspace{32mm}\times[q+s+(q-1)\mu_v].
\end{aligned}
\]
The product of the  $c_u,c_v$ coefficients is
\[
c_uc_v=\frac{c_N^2pq\mu_u\mu_v}{\delta_u\delta_v}
      =\frac{c_N^2p^2q^2\mu_u\mu_v}{pq\delta_u\delta_v}.
\]

Both products now have the common denominator
$pq\delta_u\delta_v$. Subtract them and multiply by
$pqL^4\delta_u\delta_v/c_N^2$. By
\eqref{eq:polynomial-component-relations}, the result is
$\mathcal U-\mathcal V=h^3\widehat Q$, or equivalently,
\begin{equation}\label{eq:Q-hat-expansion}
\begin{aligned}
h^3\widehat Q=L^4\bigl\{&
(\mu_u-r)(\mu_v-s)
[p+r+(p-1)\mu_u]\\
&\times[q+s+(q-1)\mu_v]-p^2q^2\mu_u\mu_v\bigr\}.
\end{aligned}
\end{equation}

Consequently,
\begin{equation}\label{eq:Q-hat}
\widehat Q=
\frac{pqL^4\delta_u\delta_v}{c_N^2h^3}
\bigl(b_ub_v-c_uc_v\bigr).
\end{equation}

The prefactor is now known to be positive, and $\widehat Q>0$.
Hence $b_ub_v-c_uc_v>0$, completing the coefficient inequalities.

\begin{proof}[\normalfont\bfseries Proof of Proposition~\ref{prop:circle-coefficients}]
The estimates above give $-2<r,s<0$ and all the signs in
\eqref{eq:target-signs}. In particular, the denominators in
Lemma~\ref{lem:identity} are nonzero. Define $J_u,J_v$ by that
lemma with the chosen parameters. Proposition~\ref{prop:criterion}
then gives \eqref{eq:constructed-coercivity}, with constants
depending only on $N,p,q$.
\end{proof}

\begin{remark}
The argument includes $g=0$, hence the relevant circular boundary.
At $(p,q)=(P,P)$, however, $x_u=x_v=0$ and
\eqref{eq:chosen-identities} gives $A_u=A_v=0$. Hence the above proof does not hold for $(P, P)$.
\end{remark}

\section{Local estimates and proof of Theorem~\ref{thm:main}}\label{sec:local}

By Proposition~\ref{prop:reduction}, it remains to prove
nonexistence in $\mathcal R_N$. We use the divergence estimate
proved in Section~\ref{sec:currents} to obtain a local integral
bound independent of the solution, then conclude by scaling.

Assume \((p,q)\in\mathcal R_N\), and take the vector fields given
by Proposition~\ref{prop:circle-coefficients}. The local estimate
we need is the following.

\begin{proposition}\label{prop:local-bound}
For fixed \(N,p,q\) with \((p,q)\in\mathcal R_N\), every positive
entire solution $(u,v)$ of \eqref{eq:system} satisfies
\begin{equation}\label{eq:local-bound}
\int_{B_1}(u^rv^{2p}+u^{2q}v^s)\dd\le C(N,p,q).
\end{equation}
\end{proposition}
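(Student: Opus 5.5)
We will test the divergence inequality \eqref{eq:constructed-coercivity} of Proposition~\ref{prop:circle-coefficients} against $\eta^m$, where $\eta\in C_c^\infty(B_{R})$ is a cutoff with $\eta=1$ on $B_\rho$, $|\nabla\eta|\le C/(R-\rho)$, $1\le\rho<R\le2$, and $m$ is large. Writing $J=J_u+\omega J_v$ and integrating by parts gives
\[
\varepsilon_0\int \eta^m\bigl(Q_u+Q_v+F_1+F_2\bigr)\dd\le m\int \eta^{m-1}|\nabla\eta|\,|J|\dd .
\]
The right side contains four kinds of terms from $J_u$ and four from $J_v$: $u^r|D^2u||\nabla u|$, $u^{r-1}|\nabla u|^3$, $u^rv^p|\nabla u|$ and $u^{r+1}v^{p-1}|\nabla v|$, together with their $v$-analogues.

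The lower-order terms will be handled by Young's inequality. For example, $u^{r-1}|\nabla u|^3\le\epsilon Q_u+C_\epsilon u^{r+2}$, where the power $4$ in $Q_u$ is matched against the $|\nabla\eta|^4\eta^{m-4}$ that appears. Similarly, $u^rv^p|\nabla u|\le\epsilon(Q_u+F_1)+C u^{r+2}$, since $(u^{(r-2)/4}|\nabla u|)(u^{r/2}v^p)u^{(r+2)/4}$ splits by Young with exponents $4,2,4$. The cross term $u^{r+1}v^{p-1}|\nabla v|$ will be split among $Q_v$, $F_1$, $F_2$ and pure powers of $u,v$ using the balance relations \eqref{eq:balanced}.

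The Hessian term is the delicate one. It is not directly controlled by the left side, since we have discarded $u^r|\mathcal H^\circ|^2$. We will either keep the trace-free square term in the inequality (it is nonnegative and appears in \eqref{eq:identity}), writing $D^2u=\mathcal H^\circ+\frac{\tr\mathcal H}{N}I-\frac ku\nabla u\otimes\nabla u$ so that $|D^2u|$ is bounded by $|\mathcal H^\circ|+v^p+u^{-1}|\nabla u|^2$, or instead integrate $u^rD^2u\nabla u\cdot\nabla\eta=\frac12u^r\nabla|\nabla u|^2\cdot\nabla\eta$ by parts once more. Either way the Hessian term is reduced to the previous types plus a harmless $u^r|\nabla u|^2|D^2\eta|$.

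After absorption we arrive at
\[
\int_{B_\rho}(Q_u+Q_v+F_1+F_2)\le \frac{C}{(R-\rho)^{\gamma}}\int_{B_R}\bigl(u^{r+2}+v^{s+2}\bigr)\dd
\]
up to similar pure-power terms. Since $-2<r,s<0$, the exponents $r+2,s+2\in(0,2)$. We will then use the known local $L^1$ bound for superharmonic positive functions, namely $\int_{B_2}v^p\le C$ and $\int_{B_2}u^q\le C$ via testing $-\Delta u=v^p$ against a cutoff and the weak Harnack/mean value inequality, which gives $\int_{B_2}u^{\gamma'}\le C$ for $\gamma'<N/(N-2)$ and similarly for $v$.

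The main obstacle is that $r+2=\tilde\kappa(q+1)$ may exceed the a priori integrability exponent. Here we will use the quartic terms. The bound on $Q_u$ controls $\nabla(u^{(r+2)/4})$ in $L^4$, so the Sobolev embedding and Gagliardo--Nirenberg interpolation between $L^1$-type control and this gradient bound show that $\int_{B_R}u^{r+2}\le \epsilon\int_{B_R}Q_u+C(R-\rho)^{-\gamma''}$. Substituting this, we get $\Phi(\rho)\le\frac12\Phi(R)+C(R-\rho)^{-\gamma}$ for $\Phi(t)=\int_{B_t}(Q_u+Q_v+F_1+F_2)$. The standard iteration lemma over concentric radii then yields \eqref{eq:local-bound}. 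We expect the choice $\tilde\kappa=(N-3)/N$ to be exactly what makes the Gagliardo--Nirenberg exponents admissible, and checking that arithmetic is the step most likely to need care.
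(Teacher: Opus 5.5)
Your proposal is correct and has the same overall structure as the paper's proof: test the divergence inequality against a cutoff, absorb everything by Young into $Q_u,Q_v,F_1,F_2$ so that only $\int(u^{t_u}+v^{t_v})$ is left, then use the local $L^1$ bound, Gagliardo--Nirenberg for $w=u^{t/4}$ with $\int|\nabla w|^4\sim\int Q_u$, Young again, and iteration over concentric balls.

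The difference is in how the boundary terms are prepared. The paper uses \eqref{eq:weak-form}, which puts both $u^rD^2u\nabla u$ and $c_uu^{r+1}v^{p-1}\nabla v=\frac{c_u}{p}\nabla(u^{r+1}v^p)-\frac{c_u(r+1)}{p}u^rv^p\nabla u$ into a gradient. Only the four single-component monomials \eqref{eq:monomials} then remain, and no mixed-exponent bookkeeping is needed. Your direct Young split of the cross term also works, precisely because of \eqref{eq:balanced}. With $a=(1-\tilde\kappa)/(2-\tilde\kappa)=3/(N+3)$ one checks
\[
u^{r+1}v^{p-1}|\nabla v|=Q_v^{1/4}F_1^{a}F_2^{1/2-a}v^{t_v/4},\qquad
v^{s+1}u^{q-1}|\nabla u|=Q_u^{1/4}F_2^{a}F_1^{1/2-a}u^{t_u/4}.
\]
Young with exponents $4,1/a,1/(1/2-a),4$ then bounds $u^{r+1}v^{p-1}|\nabla v|\,\eta^{m-1}|\nabla\eta|$ by $\epsilon(Q_v+F_1+F_2)\eta^m+C_\epsilon v^{t_v}|\nabla\eta|^4\eta^{m-4}$. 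This is the one computation your sketch leaves open, and you should write it out.

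For the Hessian, your second option (integrate by parts once more) is exactly the paper's. Your first option is also valid: keep the trace-free Hessian squares of \eqref{eq:identity} on the left, since they were simply discarded in the proof of Proposition~\ref{prop:criterion}. It spares the second integration by parts, at the price of restating the divergence inequality with those squares.

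Two corrections.
\begin{itemize}
\item The local $L^1$ bound is solution-independent only because both equations are used together with $pq>1$. The paper tests both equations with the first Dirichlet eigenfunction of $B_4$ and applies Jensen to get $V^p\le\lambda_1U$ and $U^q\le\lambda_1V$. Testing $-\Delta u=v^p$ alone, or the superharmonicity of one component, gives no uniform bound; large constants are superharmonic.
\item The Gagliardo--Nirenberg step imposes no condition on $\tilde\kappa$ beyond $t_u,t_v\in(0,2)$, since $\vartheta_t=N(t-1)/(N(t-1)+4)<1$ for every $t\in(1,2)$. The choice $\tilde\kappa=(N-3)/N$ is needed for the coefficient inequalities of Section~\ref{sec:currents} and for $\nu>N$ in the later scaling argument, not for this proposition.
\end{itemize}
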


In this section, all balls are centered at the origin unless otherwise indicated.

\subsection{A local \texorpdfstring{$L^1$}{L1} bound}

The interpolation argument requires a lower-order integral bound
independent of the solution. Testing the system with the first
Dirichlet eigenfunction on a ball gives local $L^1$ control of
both components.

Let $\psi$ be the positive first Dirichlet eigenfunction on $B_4$,
normalized by $\int_{B_4}\psi\dd=1$, and let $\lambda_1$ be its
eigenvalue. Here $\nu$ is the outward unit normal.
Hopf's lemma gives $\partial_\nu\psi<0$ on $\partial B_4$.
Green's formula then gives
\[
\int_{B_4}v^p\psi\dd
=\lambda_1\int_{B_4}u\psi\dd
+\int_{\partial B_4}u\,\partial_\nu\psi\,\mathrm dS
\le\lambda_1\int_{B_4}u\psi\dd.
\]
Applying the same argument to \(v\) gives
\[
\int_{B_4}u^q\psi\dd
=\lambda_1\int_{B_4}v\psi\dd
+\int_{\partial B_4}v\,\partial_\nu\psi\,\mathrm dS
\le\lambda_1\int_{B_4}v\psi\dd.
\]

Set $U=\int_{B_4}u\psi\dd$ and $V=\int_{B_4}v\psi\dd$.
Since $p,q>1$, Jensen's inequality gives
\[
V^p\le\lambda_1U,\qquad U^q\le\lambda_1V.
\]
Since $pq>1$, these inequalities imply
\[
U\le\lambda_1^{(p+1)/(pq-1)},\qquad
V\le\lambda_1^{(q+1)/(pq-1)}.
\]
The positive minimum of $\psi$ on $\overline B_2$ therefore yields
\begin{equation}\label{eq:L1}
\int_{B_2}(u+v)\dd\le C(N,p,q).
\end{equation}

\subsection{The cutoff estimate}

We derive the cutoff estimate needed for the bound on $B_1$.
Integration by parts and absorption of the cutoff errors leave
lower powers of the two components, which are estimated in the
next subsection.

To remove the Hessian from the boundary terms, use
$D^2u\nabla u=\frac12\nabla|\nabla u|^2$ and the product rule
to write
\begin{equation}\label{eq:weak-form}
\begin{aligned}
J_u={}&\nabla\left(\frac12u^r|\nabla u|^2
+\frac{c_u}{p}u^{r+1}v^p\right)\\
&-(r-k_u)u^{r-1}|\nabla u|^2\nabla u\\
&-\left(b_u-\frac1N+\frac{c_u(r+1)}p\right)
u^rv^p\nabla u.
\end{aligned}
\end{equation}
For $1\le \Gamma<\Gamma'\le2$, put $\ell=\Gamma'-\Gamma$. Choose a cutoff
$\eta\in C_c^\infty(B_{\Gamma'})$, with $0\le\eta\le1$ and
$\eta=1$ on $B_\Gamma$, and set $\varphi=\eta^4$. Choose $\eta$ so that
\[
|\nabla\varphi|\le C\ell^{-1}\varphi^{3/4},\qquad
|\Delta\varphi|\le C\ell^{-2}\varphi^{1/2}.
\]
The factors in the boundary terms can be written as
\begin{equation}\label{eq:monomials}
\begin{aligned}
u^r|\nabla u|^2&=Q_u^{1/2}u^{t_u/2},&
u^{r-1}|\nabla u|^3&=Q_u^{3/4}u^{t_u/4},\\
u^rv^p|\nabla u|&=Q_u^{1/4}F_1^{1/2}u^{t_u/4},&
u^{r+1}v^p&=F_1^{1/2}u^{t_u/2}.
\end{aligned}
\end{equation}
Here, by Proposition~\ref{prop:circle-coefficients}, \(-2<r,s<0\), hence
\begin{equation}\label{eq:t-range}
t_u=r+2\in(0,2),\qquad t_v=s+2\in(0,2).
\end{equation}

Young's inequality, with the indicated cutoff powers, gives
\begin{equation}\label{eq:young}
\begin{aligned}
u^r|\nabla u|^2|\Delta\varphi|
&\le\epsilon Q_u\varphi+C_\epsilon\ell^{-4}u^{t_u},\\
u^{r-1}|\nabla u|^3|\nabla\varphi|
&\le\epsilon Q_u\varphi+C_\epsilon\ell^{-4}u^{t_u},\\
u^rv^p|\nabla u||\nabla\varphi|
&\le\epsilon(Q_u+F_1)\varphi+C_\epsilon\ell^{-4}u^{t_u},\\
u^{r+1}v^p|\Delta\varphi|
&\le\epsilon F_1\varphi+C_\epsilon\ell^{-4}u^{t_u}.
\end{aligned}
\end{equation}

The same estimates apply to $J_v$. Multiplying
\eqref{eq:constructed-coercivity} by $\varphi$ and integrating
by parts using \eqref{eq:weak-form}, we obtain
\[
\begin{aligned}
&(\varepsilon_0-C_0\epsilon)
\int_{B_{\Gamma'}}
(Q_u+Q_v+F_1+F_2)\varphi\dd\\
&\qquad\le
C_\epsilon(\Gamma'-\Gamma)^{-4}
\int_{B_{\Gamma'}}(u^{t_u}+v^{t_v})\dd,
\end{aligned}
\]
where $C_0$ depends only on $N,p,q$.
Choose $\epsilon>0$ such that
$C_0\epsilon\le\varepsilon_0/2$. Set
\[
\mathcal I(\Gamma)
=\int_{B_\Gamma}(Q_u+Q_v+F_1+F_2)\dd.
\]
Since $\varphi=1$ on $B_\Gamma$ and $\varphi\ge0$, we have
\[
\frac{\varepsilon_0}{2}\mathcal I(\Gamma)
\le
\frac{\varepsilon_0}{2}
\int_{B_{\Gamma'}}
(Q_u+Q_v+F_1+F_2)\varphi\dd
\le
C_\epsilon(\Gamma'-\Gamma)^{-4}
\int_{B_{\Gamma'}}(u^{t_u}+v^{t_v})\dd.
\]
Thus
\begin{equation}\label{eq:annular-ineq}
\mathcal I(\Gamma)
\le C(\Gamma'-\Gamma)^{-4}
\int_{B_{\Gamma'}}(u^{t_u}+v^{t_v})\dd,
\end{equation}
where $C$ depends only on $N,p,q$.

\subsection{Interpolation and iteration}

The lower-power integrals in \eqref{eq:annular-ineq} are controlled by
using the $L^1$ bound \eqref{eq:L1} and the quartic gradient terms.
The Gagliardo-Nirenberg inequality gives a sublinear energy estimate.
Young's inequality and an iteration over nested balls then remove the
dependence on the energy on the outer ball and prove
Proposition~\ref{prop:local-bound}.

If $0<t\le1$, \eqref{eq:L1} and Jensen's inequality give
$\int_{B_{\Gamma'}}u^t\dd\le C$. For $1<t<2$, put $w=u^{t/4}$.
Then
\[
\int_{B_{\Gamma'}}w^{4/t}\dd\le C,\qquad
\int_{B_{\Gamma'}}|\nabla w|^4\dd
=\left(\frac t4\right)^4
\int_{B_{\Gamma'}}u^{t-4}|\nabla u|^4\dd.
\]
On $B_{\Gamma'}$, the Gagliardo-Nirenberg inequality
\cite[Theorem~1]{Nirenberg66} gives
\[
\norm w_{L^4}\le
C\norm{\nabla w}_{L^4}^{\vartheta_t}
\norm w_{L^{4/t}}^{1-\vartheta_t}
+C\norm w_{L^{4/t}},\qquad
\vartheta_t=\frac{N(t-1)}{N(t-1)+4}<1,
\]
where
$1/4=\vartheta_t(1/4-1/N)+(1-\vartheta_t)t/4$.
The constant $C=C(N,t)$ is uniform for $\Gamma'\in[1,2]$.
Consequently,
\[
\int_{B_{\Gamma'}}u^t\dd\le
C\left[1+
\left(\int_{B_{\Gamma'}}u^{t-4}|\nabla u|^4\dd\right)^{\vartheta_t}\right].
\]
The same argument applies to $v$.

 Set $\vartheta_t=0$ when
$t\le1$ and let $\vartheta=\max\{\vartheta_{t_u},\vartheta_{t_v}\}<1$.
Equation~\eqref{eq:annular-ineq} becomes
\[
\mathcal I(\Gamma)\le C(\Gamma'-\Gamma)^{-4}
\bigl(1+\mathcal I(\Gamma')^\vartheta\bigr).
\]
Put $\ell=\Gamma'-\Gamma\in(0,1]$ and
$\gamma=4/(1-\vartheta)$.
For $0<\vartheta<1$, we have
\[
\ell^{-4}=(\ell^{-\gamma})^{1-\vartheta},
\qquad
\ell^{-4}\le\ell^{-\gamma}.
\]
Applying Young's inequality with conjugate exponents
$1/\vartheta$ and $1/(1-\vartheta)$, we obtain, for every
$\epsilon>0$,
\begin{equation}\label{eq:iteration-step}
\begin{aligned}
\mathcal I(\Gamma)
&\le C\ell^{-4}\bigl(1+\mathcal I(\Gamma')^\vartheta\bigr)\\
&=C\ell^{-4}
  +C\mathcal I(\Gamma')^\vartheta
    (\ell^{-\gamma})^{1-\vartheta}\\
&\le C\ell^{-4}
  +\epsilon\mathcal I(\Gamma')
  +\widetilde C_\epsilon\ell^{-\gamma}\\
&\le \epsilon\mathcal I(\Gamma')
  +(C+\widetilde C_\epsilon)\ell^{-\gamma}\\
&=\epsilon\mathcal I(\Gamma')
  +C_\epsilon(\Gamma'-\Gamma)^{-\gamma},
\end{aligned}
\end{equation}
where $C_\epsilon=C+\widetilde C_\epsilon$ depends only on
$N,p,q$ and $\epsilon$.
If $\vartheta=0$, the same conclusion follows directly
with $\gamma=4$ and $C_\epsilon=2C$.

Take $\Gamma_j=2-2^{-j}$ and fix $\epsilon>0$ with
$\epsilon2^\gamma<1$. Iteration yields
\[
\mathcal I(1)\le\epsilon^J\mathcal I(\Gamma_J)
+C_\epsilon2^\gamma\sum_{j=0}^{J-1}(\epsilon2^\gamma)^j.
\]
For each fixed solution, smoothness and positivity on $\overline B_2$
give $\mathcal I(2)<\infty$. The first term on the right tends to zero as
$J\to\infty$, while the geometric series has a bound independent
of the solution. This proves Proposition~\ref{prop:local-bound}.

\subsection{Nonexistence by scaling}\label{sec:technical-scaling}

We show that the common scaling degree of $F_1,F_2$ exceeds $N$
and apply the uniform local bound to the rescaled solution. The estimate obtained by scaling forces the integral over $B_R$
to be zero, which contradicts $u,v>0$.
Theorem~\ref{thm:main} then follows from
Proposition~\ref{prop:reduction}.

The exponents in \eqref{eq:scaling} satisfy
\begin{equation}\label{eq:scale-identities}
\begin{gathered}
p\beta=\alpha+2,\qquad q\alpha=\beta+2,\\
\alpha+\beta-(N-2)
=\frac{(N-2)[G_N+(p-q)^2]}{2(pq-1)}.
\end{gathered}
\end{equation}

\begin{proof}[\normalfont\bfseries Proof of Theorem~\ref{thm:main}]
Proposition~\ref{prop:reduction} reduces the proof to $N\ge5$ and
$(p,q)\in\mathcal R_N$. Suppose a positive entire solution $(u,v)$ exists
for such a pair. Since $p\ne q$ and $G_N\ge0$,
\eqref{eq:scale-identities} gives $\alpha+\beta+2>N$.
The two integrands $F_1=u^rv^{2p},F_2=v^su^{2q}$ have the same homogeneity:
\begin{equation}\label{eq:source-degree}
\begin{aligned}
\nu&=r\alpha+2p\beta=2q\alpha+s\beta\\
&=\tilde{\kappa}(\alpha+\beta+2)+4
>\tilde{\kappa} N+4=N+1.
\end{aligned}
\end{equation}

\Needspace{7\baselineskip}
For every $R>0$, the pair
$u_R(x)=R^\alpha u(Rx)$, $v_R(x)=R^\beta v(Rx)$
solves the same system. Apply \eqref{eq:local-bound} to this
pair and change variables:
\[
\int_{B_R}(F_1+F_2)\dd
\le C(N,p,q)R^{N-\nu}\longrightarrow0
\qquad (R\to\infty).
\]
The left-hand side is positive and nondecreasing in the radius,
whereas the right-hand side tends to zero. This is a contradiction.
\end{proof}

\Needspace{18\baselineskip}
\appendix
\section{Computational data}\label{app:data}

Table~\ref{tab:signs} records the calculation used in
Lemma~\ref{lem:polynomial-bounds}. It lists the coordinate degrees,
the number of Bernstein coefficients and their exact minimum
for each of the four polynomials. The box is
$[0,8/15]\times[0,1]^2$, and the degrees are listed in the order
$(h,y,z)$.

\par\medskip\noindent
\begin{minipage}{\textwidth}
\centering\small
\captionof{table}{Exact positivity certificates. Degrees are ordered as
$(h,y,z)$; the last column is the smallest Bernstein coefficient.}
\label{tab:signs}\label{tab:full-certificate}
\begin{tabular}{@{}lrrr@{}}
\toprule
Polynomial & Degrees & Coefficients & Minimum\\
\midrule
$\widehat\delta_u$ & $(21,5,4)$ & 660 & $4096$\\
$\widehat\delta_v$ & $(17,5,1)$ & 216 & $4096$\\
$R_*$ & $(37,9,4)$ & 1900 & $2^{24}$\\
$H_*$ & $(37,9,4)$ & 1900 & $3\cdot2^{24}$\\
\midrule
Total && 4676 &\\
\bottomrule
\end{tabular}
\end{minipage}
\par\medskip

To reproduce the table, use Python with SymPy~1.14.0. From the
directory containing \texttt{verify.py} and
\texttt{requirements.txt}, run
\begin{quote}\ttfamily
python -m pip install -r requirements.txt\\
python verify.py
\end{quote}
The program computes all 4676 coefficients directly from the
polynomial definitions and \eqref{eq:bernstein-coefficients},
using exact rational arithmetic. For each polynomial, it prints
the three degrees, the coefficient count and the smallest
coefficient, as listed in Table~\ref{tab:signs}.

The complete coefficient tables are also supplied in
\texttt{certificates/source.json} for $R_*,H_*$ and
\texttt{certificates/auxiliary.json} for
$\widehat\delta_u,\widehat\delta_v$. Their keys are
\texttt{R\_star}, \texttt{H\_star}, \texttt{delta\_u\_hat} and
\texttt{delta\_v\_hat}, respectively, matching the names in the
program. These files record the computed values; the program
obtains its coefficients directly from the formulas.

\sloppy
\hbadness=10000

\hbadness=1000
\fussy
\end{document}